\definecolor{purple}{rgb}{0.65, 0, 0.9}
\definecolor{orange}{rgb}{1,.5,0}
\def\@abssec#1{\vspace{.05in}\footnotesize \parindent .2in
{\bf #1. }\ignorespaces}
\newtheorem{theorem}{Theorem}[section]
\newtheorem{lemma}[theorem]{Lemma}
\newtheorem{proposition}[theorem]{Proposition}
\newtheorem{corollary}[theorem]{Corollary}
\newtheorem{definition}[theorem]{Definition}
\newtheorem{remark}[theorem]{Remark}
\def \R {\mathbb R}
\def \T {\mathbb T}
\def \N {\mathbb N}
\def \Z {\mathbb Z}
\newcommand{\be}{\mathbf e}
\allowdisplaybreaks \numberwithin{equation}{section}
\renewcommand{\be}{\begin{equation}}
\newcommand{\ee}{\end{equation}}
\title[Small scale formations in the incompressible porous media equation]{Small scale formations in the incompressible porous media equation}
\author{Alexander Kiselev}
\thanks{Department of
Mathematics, Duke University, 120 Science Dr., Durham NC 27708, USA;
email: kiselev@math.duke.edu}
\author{Yao Yao}
\thanks{School of
Mathematics, Georgia Institute of Technology, Atlanta, GA 30332, USA; email: yaoyao@math.gatech.edu}
\begin{document}


\begin{abstract}
We construct examples of solutions to the incompressible porous media (IPM) equation that must exhibit
infinite in time growth of derivatives provided they remain smooth.
As an application, this allows us to obtain nonlinear instability for a class of stratified steady states of IPM.\end{abstract}

\subjclass[2010]{35Q35,76B03}
\keywords{IPM equation, two-dimensional incompressible {\rm fl}ow, small scale creation, derivatives growth, nonlinear instability}

\maketitle

\section{Introduction}\label{intro}

In this paper, we consider the 2D incompressible porous media (IPM) equation. The equation describes evolution of density carried by the flow of incompressible fluid
that is determined via Darcy's law in the field of gravity:
\begin{equation}\label{ipm}
\partial_t \rho +(u \cdot \nabla ) \rho =0, \,\,\, \nabla \cdot u =0, \,\,\, u +\nabla p = -(0,\rho).
\end{equation}
Here $\rho$ is the transported density, $u$ is the vector field describing the fluid motion, and $p$ is the pressure. Throughout this paper, we consider the spatial domain $\Omega$ to be one of the following: the whole space $\mathbb{R}^2$, the torus $\T^2 = [-\pi, \pi)^2$, or the bounded strip $S:=\T\times [-\pi,\pi]$ that is periodic in $x_1$. In the last case, due to the presence of boundaries, $u$ also satisfies $u\cdot n=0$ for $x_2=\pm \pi$. In all the three cases, one can obtain a more explicit Biot-Savart law for $u$: \[
u = \nabla^\perp (-\Delta_\Omega)^{-1} \partial_{x_1} \rho.
\]
Here $\nabla^\perp = (-\partial_{x_2}, \partial_{x_1})$, and the inverse Laplacian $(-\Delta_\Omega)^{-1}$ for $\Omega = \mathbb{T}^2$ and $\Omega = \T\times [-\pi,\pi]$ will be specified in Section~\ref{prelim}.

There have been many recent papers analyzing the well-posedness questions for the IPM equation and its variants \cite{CCL,CG,CGO,FGSV,YY}, lack of uniqueness of weak solutions \cite{CFG,LS},
and questions of long time dynamics \cite{ElgindiIPM, CCL}. Viewed as an active scalar, the IPM equation is less regular than the 2D Euler equation in vorticity form,
and has the same level of regularity as the SQG equation.
Local well-posedness for sufficiently regular initial data  has been proved in \cite{CGO}
for $\R^2$, and \cite{CCL} for the strip $S$. The argument can be adapted to the periodic setting $\T^2$; we will sketch a simple proof in Section~\ref{sec_torus}. The question of global regularity vs finite-time blow up
is open for the IPM equation, similarly to the SQG equation case. Moreover, to the best of our knowledge, there are not even examples of smooth solutions to the IPM
equation that have infinite growth of derivatives. There are plenty of such examples for the 2D Euler equation, going back to work of Yudovich \cite{Jud1,Yud2} (see e.g. \cite{Den1}, \cite{KS}
for more recent examples and further references). However, for the more singular SQG equation case, such examples have been established only recently \cite{HK}.
The reason for such delay is that an example of infinite in time creation of small scales requires sufficiently strong control of the solution, which is not easily achieved when
the drift is more singular. The example of \cite{HK} is based on the insight gained in the constructions for the 2D Euler case \cite{KS,Z1}, and is based on a hyperbolic point scenario
controlled by odd-odd symmetry of the active scalar. It is tempting to use a similar idea for the IPM equation, but its structure is different - in particular, odd symmetry in $x_2$ but even
symmetry of $\rho$ in $x_1$ is conserved instead of the odd-odd symmetry for the SQG equation. This, and the more detailed structure of the Biot-Savart law, appear to be significant obstacles
in extending ideas of \cite{HK,KS,Z1} to the IPM equation setting.
In this paper, we construct examples of infinite growth of derivatives in smooth solutions of the IPM equation using a different idea, exploiting existence of
monotone quantity which corresponds to the potential energy of the fluid.
All our estimates below assume that the solutions remain smooth; more specifically, the arguments work if $\rho$ and $u$ are at least Lipschitz. If this regularity fails in finite time, we already
have an even more dramatic effect than what we are trying to establish.

\subsection{Small scale formation in IPM}

In this paper, we consider the following three scenarios:

(S1) Let $\Omega = \mathbb{R}^2$. Assume $\rho_0\in C_c^\infty(\mathbb{R}^2)$ is odd in $x_2$, and $\rho_0\geq 0$ in $\mathbb{R}\times\mathbb{R}^+$.

(S2) Let $\Omega = \mathbb{T}^2=[-\pi,\pi)^2$ be the 2D torus. Assume $\rho_0 \in C^\infty(\mathbb{T}^2)$ is odd in $x_2$.

(S3) Let $\Omega = S:=\mathbb{T} \times [-\pi,\pi]$ be a bounded strip that is periodic in $x_1$. Assume $\rho_0 \in C^\infty(S)$.

Our first result shows that in the scenario (S1), $\rho(t)$ must have infinite-in-time growth in $\dot{H}^s$ norm for any $s>0$, if it remains regular for all times. Note that $s>0$ is the sharp threshold, since for $s=0$ we know $\|\rho(t)\|_{L^2}=\|\rho_0\|_{L^2}$ does not grow in time.

\begin{theorem}
\label{thm_whole_space}
For $\Omega=\mathbb{R}^2$, let $\rho_0 \not\equiv 0$ satisfy the scenario (S1). Assuming that there is a global-in-time smooth solution $\rho(x,t)$ to \eqref{ipm} with initial data $\rho_0$, we have
\begin{equation}
\label{whole_space_int}
\int_0^\infty \|\rho(t)\|_{\dot{H}^s(\R^2)}^{-\frac{4}{s}} dt \leq C(s, \rho_0)  \quad\text{ for all }s>0,
\end{equation}
which implies
\begin{equation}
\label{whole_space_ptwise}
\limsup_{t\to\infty} t^{-\frac{s}{4}} \|\rho(t)\|_{\dot{H}^s(\R^2)}=\infty \quad\text{ for all }s>0.
\end{equation}
\end{theorem}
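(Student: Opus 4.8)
The plan is to exploit a monotone quantity associated with the potential energy and combine it with an interpolation inequality to force growth of high Sobolev norms. Recall that the potential energy for IPM is (up to sign) $V(t) = \int_{\R^2} x_2\, \rho(x,t)\, dx$. The first step is to compute $\frac{d}{dt} V(t)$ along a smooth solution: using the equation $\partial_t \rho = -u\cdot\nabla\rho = -\nabla\cdot(u\rho)$ and integrating by parts, one gets $\frac{d}{dt} V(t) = \int_{\R^2} u_2\, \rho\, dx = \int \rho\, \partial_{x_1}(-\Delta)^{-1}\partial_{x_1}\rho\, dx = -\|\partial_{x_1}(-\Delta)^{-1/2}\rho\|_{L^2}^2 \le 0$. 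So $V$ is monotone decreasing. (One must check the decay of $\rho$ and $u$ at spatial infinity justifies the integrations by parts; the $C_c^\infty$ initial data and finite speed-of-spreading type estimates, or at least the assumed smoothness plus $L^2 \cap L^1$ control, should suffice.) The odd-in-$x_2$ symmetry with $\rho_0 \ge 0$ on the upper half-plane guarantees $V(0) > 0$ (strictly, since $\rho_0 \not\equiv 0$), and also that $V(t) \ge 0$ for all $t$ because the symmetry is preserved. Hence $\int_0^\infty \|\partial_{x_1}(-\Delta)^{-1/2}\rho(t)\|_{L^2}^2\, dt = V(0) - \lim_{t\to\infty}V(t) \le V(0) < \infty$.

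The second step is to bound the dissipated quantity $D(t) := \|\partial_{x_1}(-\Delta)^{-1/2}\rho(t)\|_{L^2}^2$ from below by a negative power of $\|\rho(t)\|_{\dot H^s}$, using the conserved quantity $\|\rho(t)\|_{L^2} = \|\rho_0\|_{L^2}$. The idea is a three-term interpolation in frequency: the low-frequency "very negative" norm $\|\,|\xi|^{-1}\hat\rho\|$ — morally controlled by $D(t)$ together with a priori $L^1$-type bounds — the $L^2$ norm at the middle, and the $\dot H^s$ norm at the top. Concretely, for $\rho$ with conserved $L^2$ norm and bounded mass, one expects an inequality of the shape
\begin{equation}
\|\rho\|_{L^2}^{?} \le C\, D^{?}\, \|\rho\|_{\dot H^s}^{?},
\end{equation}
with exponents determined by scaling. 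Counting dimensions: $\partial_{x_1}(-\Delta)^{-1/2}$ has order $0$ but the anisotropy ($\partial_{x_1}$ only) means one should instead use the cheap bound $D(t) \gtrsim \|\,|\xi|^{-1}\partial_{x_1}\hat\rho\,\|$-type quantity against a genuinely negative-order norm; pairing a negative-order norm of order $-1$ (or $-1$ in the full gradient after using that the symbol $|\xi_1|/|\xi|$ is bounded, plus a separate low-frequency input) against $\dot H^s$ with $\dot H^0 = L^2$ fixed in the middle gives, by Hölder in frequency, $\|\rho\|_{L^2}^{2} \le C\, \|\rho\|_{\dot H^{-1}}^{\frac{2s}{s+1}} \|\rho\|_{\dot H^s}^{\frac{2}{s+1}}$, and then one absorbs the anisotropic discrepancy using conservation of mass / support considerations to replace $\|\rho\|_{\dot H^{-1}}$ by something controlled by $D(t)^{1/2}$. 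Tracking the exponents and how many powers of $D$ and $\|\rho\|_{\dot H^s}$ appear should produce exactly $D(t) \ge c(\rho_0)\, \|\rho(t)\|_{\dot H^s}^{-4/s}$, matching the claimed exponent $-4/s$. Integrating this in $t$ against the finite bound $\int_0^\infty D(t)\, dt \le V(0)$ yields \eqref{whole_space_int}.

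The third step is the elementary deduction of \eqref{whole_space_ptwise} from \eqref{whole_space_int}: if $\|\rho(t)\|_{\dot H^s} \le M\, t^{s/4}$ for all large $t$, then $\|\rho(t)\|_{\dot H^s}^{-4/s} \ge M^{-4/s}\, t^{-1}$, whose integral diverges, contradicting \eqref{whole_space_int}.

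The step I expect to be the main obstacle is the lower bound on $D(t)$ — that is, making the anisotropic interpolation precise. The quantity being integrated involves only $\partial_{x_1}$, so it degenerates for density profiles that are nearly independent of $x_1$ (e.g. stratified states); one has to use genuinely that the IPM dynamics with the given symmetry cannot settle into such a profile while keeping $V$ bounded below, or more robustly, establish the interpolation with an explicit constant depending only on $\|\rho_0\|_{L^2}$ and $\|\rho_0\|_{L^1}$ by combining a Fourier-side Hölder inequality with the $L^\infty$-in-time bounds on low-order norms ($\|\rho(t)\|_{L^1}$, $\|\rho(t)\|_{L^2}$) that follow from the transport structure. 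Getting the bookkeeping of exponents to close at precisely $-4/s$ (rather than some other negative power) is where the real care is needed; the monotonicity in the first step and the deduction in the third step are comparatively routine.
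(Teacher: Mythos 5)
Your Step 1 (monotonicity of the potential energy, $E'(t)=-\|\partial_{x_1}\rho\|_{\dot H^{-1}}^2=:-\delta(t)$, together with $\int_0^\infty\delta(t)\,dt\le E(0)<\infty$ from the sign/oddness structure) and your Step 3 (deducing \eqref{whole_space_ptwise} from \eqref{whole_space_int} via divergence of $\int t^{-1}dt$) coincide with the paper and are fine. The genuine gap is Step 2, which you yourself flag as the obstacle: you never prove the pointwise-in-time bound $\delta(t)\ge c(\rho_0)\|\rho(t)\|_{\dot H^s}^{-4/s}$. The route you sketch --- the isotropic interpolation $\|\rho\|_{L^2}^2\le\|\rho\|_{\dot H^{-1}}^{2s/(s+1)}\|\rho\|_{\dot H^s}^{2/(s+1)}$ followed by ``absorbing the anisotropic discrepancy'' so as to replace $\|\rho\|_{\dot H^{-1}}$ by something controlled by $\delta^{1/2}$ --- cannot work: $\delta(t)=\int(\xi_1^2/|\xi|^2)|\hat\rho|^2d\xi$ is an order-zero quantity whose multiplier degenerates on the $\xi_2$-axis, and no a priori bound of the form $\|\rho\|_{\dot H^{-1}}\le C(\rho_0)\delta^{1/2}$ holds (think of $\hat\rho$ concentrated near the $\xi_2$-axis, i.e.\ nearly stratified profiles). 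Moreover, even if such a reduction were available it would yield the exponent $-2/s$, not $-4/s$; asserting that ``tracking the exponents should produce exactly $-4/s$'' is precisely the missing argument, since $-4/s$ does not come from any isotropic scaling count.

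What the paper actually does at this point is an anisotropic frequency-cone argument. Set $C_2=\|\rho_0\|_{L^2}^2$ and work on $I=\{t:\delta(t)<C_2/4\}$, whose complement has measure at most $4C_0/C_2$. If $\delta$ is small, then by Chebyshev at least half of the $L^2$ mass of $\hat\rho$ lies in the narrow cone $\{|\xi_1|/|\xi|<\sqrt{2\delta/C_2}\}$ about the $\xi_2$-axis. Since $\|\hat\rho(t)\|_{L^\infty}\le(2\pi)^{-1}\|\rho_0\|_{L^1}$ (conservation of $L^1$), the part of that cone with $|\xi_2|<h$ has area of order $h^2\sqrt{\delta}$ and can carry only that much mass; choosing $h_\delta\sim\delta^{-1/4}$ leaves a fixed fraction (say $C_2/4$) of the mass at $|\xi_2|\ge h_\delta$, whence $\|\rho\|_{\dot H^s}^2\ge \tfrac{C_2}{4}h_\delta^{2s}\gtrsim\delta^{-s/2}$, which is exactly the $-4/s$ bound. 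This concentration estimate is the heart of the proof and is absent from your proposal. You would also need, as the paper does, the uniform lower bound $\|\rho(t)\|_{\dot H^s}\ge c(s,\rho_0)>0$ (from $\|\rho\|_{L^2}\le\|\rho\|_{\dot H^s}^{1/(1+s)}\|\rho\|_{L^1}^{s/(1+s)}$ and conservation of the $L^1$ and $L^2$ norms) to handle the times outside $I$ when integrating $\|\rho(t)\|_{\dot H^s}^{-4/s}$ in time.
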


The next result concerns the torus scenario (S2), where we prove infinite-in-time growth under some additional symmetry and positivity assumptions on $\rho_0$. As we will see in the remark afterwards, the same result also holds for the bounded strip scenario (S3).
\begin{theorem}
\label{thm_periodic}
For $\Omega=\mathbb{T}^2$, let $\rho_0\not\equiv 0$ satisfy the scenario (S2). In addition, assume that $\rho_0$ is even in $x_1$, $\rho_0=0$ for $x_1=0$, and $\rho_0\geq 0$ in $D:=[0,\pi]^2$. Assuming that there is a global-in-time smooth solution $\rho(x,t)$ to \eqref{ipm} with initial data $\rho_0$, we have
\begin{equation}
\label{periodic_int}
\int_0^\infty \|\partial_{x_1}\rho(t)\|_{\dot{H}^s(\T^2)}^{-\frac{2}{2s+1}} dt \leq C(s,\rho_0) \quad\text{ for all }s>-\frac{1}{2},
\end{equation}
which implies
\begin{equation}
\label{periodic_ptwise}
\limsup_{t\to\infty} t^{-(s+\frac{1}{2})} \|\rho(t)\|_{\dot{H}^{s+1}(\T^2)} \geq \limsup_{t\to\infty} t^{-(s+\frac{1}{2})} \|\partial_{x_1}\rho(t)\|_{\dot{H}^s(\T^2)}=\infty \quad\text{ for all }s>-\frac{1}{2}.
\end{equation}
\end{theorem}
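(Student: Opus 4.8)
The plan is to use the potential energy of the fluid as a monotone quantity. Set $I(t):=\int_{\T^2}\rho(x,t)\,x_2\,dx$. First I would check that the symmetries and the positivity are dynamically stable: $\rho(\cdot,t)$ stays odd in $x_2$ and even in $x_1$, the set $\{x_1=0\}$ remains a level set of $\rho=0$, and the four coordinate segments $\{x_1=0\}$, $\{x_1=\pi\}$, $\{x_2=0\}$, $\{x_2=\pi\}$ are invariant curves for the flow (the relevant one‑sided normal component of $u=\nabla^\perp(-\Delta)^{-1}\partial_{x_1}\rho$ vanishes on each of them by the parity of the Biot–Savart kernel), so $D=[0,\pi]^2$ is an invariant square and $\rho\ge 0$ persists on $D$. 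Oddness in $x_2$ together with $\rho\ge 0$ on $\{0\le x_2\le\pi\}$ gives $I(t)=2\int_{\{0<x_2<\pi\}}\rho\,x_2\,dx\ge 0$, and an integration by parts using $\nabla\cdot u=0$ and the Biot–Savart law yields the dissipation identity
\[
\frac{d}{dt}I(t)=\int_{\T^2}\rho\,u_2\,dx=-\|u(t)\|_{L^2(\T^2)}^2=-\|\partial_{x_1}\rho(t)\|_{\dot{H}^{-1}(\T^2)}^2\le 0 .
\]
Since $I\ge 0$ and nonincreasing, this gives the a priori bound $\int_0^\infty\|\partial_{x_1}\rho(t)\|_{\dot{H}^{-1}(\T^2)}^2\,dt\le I(0)<\infty$.

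The heart of the matter is a pointwise‑in‑time lower bound $\|\partial_{x_1}\rho(t)\|_{\dot{H}^{-1}}\ge c(s,\rho_0)\,\|\partial_{x_1}\rho(t)\|_{\dot{H}^{s}}^{-1/(2s+1)}$, and the quantity that makes it work is the mass $m_0:=\int_D\rho(x,t)\,dx$, which is conserved ($D$ is invariant and $u\cdot n=0$ on $\partial D$) and, because $\rho_0\not\equiv0$ forces $\rho_0\not\equiv0$ on $D$, is strictly positive. The key inequality I would establish is
\[
m_0\ \le\ C_s\,\|\partial_{x_1}\rho\|_{\dot{H}^{-1}(\T^2)}^{\frac{2s+1}{2(s+1)}}\,\|\partial_{x_1}\rho\|_{\dot{H}^{s}(\T^2)}^{\frac{1}{2(s+1)}},\qquad s>-\tfrac12 ,
\]
for every $\rho$ with the symmetry and positivity of the theorem, uniformly in $s$ (no case split at $s=0$, since it is proved from exact spectral identities, not from interpolation between two fixed indices). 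To prove it, write $\rho(x_1,x_2)=\sum_{n\ge 1}a_n(x_2)\,(\cos nx_1-1)$ — this is exactly what "even in $x_1$" and "$\rho=0$ at $x_1=0$" provide — so that $m_0=-\pi\sum_{n\ge 1}\int_0^\pi a_n(x_2)\,dx_2$; expanding $a_n$ in its $x_2$‑Fourier series gives $m_0=\sum_{n\ge 1,\,k_2}e_{n,k_2}\,\widehat{a_n}(k_2)$ with $|e_{n,k_2}|\lesssim\langle k_2\rangle^{-1}$, the decay coming from integrating over the half‑period $[0,\pi]$ in $x_2$. Now estimate $|m_0|$ by Cauchy–Schwarz after splitting the sum at a frequency scale $|k|=\sqrt{n^2+k_2^2}=R$: pair the part $|k|\le R$ against $\|\partial_{x_1}\rho\|_{\dot{H}^{-1}}^2\asymp\sum n^2|k|^{-2}|\widehat{a_n}(k_2)|^2$ — legitimate precisely because the sum runs over $n\ge1$, so the $k_1=0$ modes, to which the degenerate norm $\|\partial_{x_1}\rho\|_{\dot{H}^{-1}}$ is blind, never occur — and the part $|k|>R$ against $\|\partial_{x_1}\rho\|_{\dot{H}^{s}}^2\asymp\sum n^2|k|^{2s}|\widehat{a_n}(k_2)|^2$. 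Careful bookkeeping of the weights (using $\langle k_2\rangle^2 n^2\gtrsim|k|^2$ and the vanishing of $e_{n,k_2}$ for even $k_2\ne 0$) gives $m_0\lesssim R^{1/2}\|\partial_{x_1}\rho\|_{\dot{H}^{-1}}+R^{-s-1/2}\|\partial_{x_1}\rho\|_{\dot{H}^{s}}$; the numerical series behind the second term converges \emph{exactly} when $s>-\tfrac12$, with value $\asymp R^{-(2s+1)}$, and it is the $\langle k_2\rangle^{-1}$ gain that pushes this threshold down from the naive $s>0$ to $s>-\tfrac12$. Optimizing over $R\ge 1$ yields the displayed bound; rearranging and using conservation of $m_0$ gives the pointwise estimate.

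Combining the two ingredients,
\[
\int_0^\infty\|\partial_{x_1}\rho(t)\|_{\dot{H}^{s}(\T^2)}^{-\frac{2}{2s+1}}\,dt\ \le\ c(s,\rho_0)^{-1}\int_0^\infty\|\partial_{x_1}\rho(t)\|_{\dot{H}^{-1}(\T^2)}^2\,dt\ \le\ c(s,\rho_0)^{-1}I(0)<\infty ,
\]
which is \eqref{periodic_int}. For \eqref{periodic_ptwise}: were $\limsup_{t\to\infty}t^{-(s+1/2)}\|\partial_{x_1}\rho(t)\|_{\dot{H}^{s}}<\infty$, then $\|\partial_{x_1}\rho(t)\|_{\dot{H}^{s}}\le Mt^{s+1/2}$ for all large $t$, so the integrand above would be $\ge M^{-2/(2s+1)}t^{-1}$ for large $t$, contradicting finiteness of the integral; hence that $\limsup$ is $+\infty$. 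The first inequality in \eqref{periodic_ptwise} is immediate from $|k_1|\le|k|$, i.e. $\|\partial_{x_1}\rho\|_{\dot{H}^{s}}\le\|\rho\|_{\dot{H}^{s+1}}$. For the strip $S$ (scenario (S3)) I would run the identical scheme, with the boundary condition $u\cdot n=0$ at $x_2=\pm\pi$ replacing the odd‑in‑$x_2$ symmetry in the dissipation identity and $\partial S$ playing the role of $\{x_2=0,\pi\}$.

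I expect the interpolation inequality of the second paragraph to be the main obstacle, specifically producing the sharp exponent $\tfrac{1}{2(s+1)}$ on $\|\partial_{x_1}\rho\|_{\dot{H}^s}$ — equivalently the integrability exponent $\tfrac{2}{2s+1}$ and the sharp threshold $s>-\tfrac12$. The dissipation identity and the extraction of the $\limsup$ are routine; what takes work is (i) realizing that the correct conserved quantity is the linear functional $\int_D\rho\,dx$ — which is what forces genuine use of both the $x_1$‑symmetry and the positivity on the invariant square, whereas a conserved $L^p$ norm would only deliver the weaker exponent $\tfrac{2}{s}$ — and (ii) the frequency‑split Cauchy–Schwarz, in which the $\langle k_2\rangle^{-1}$ decay and the absence of $k_1=0$ terms must be exploited simultaneously to keep the high‑frequency tail summable all the way down to $s=-\tfrac12$.
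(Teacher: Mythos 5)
Your proposal is correct, but it takes a genuinely different route from the paper. The paper also starts from the monotone potential energy and the bound $\int_0^\infty\|\partial_{x_1}\rho\|_{\dot H^{-1}}^2dt<\infty$, but then reduces to a one-dimensional object: the first sine mode in $x_2$, $g(x_1,t)=\int_0^\pi\sin(x_2)\rho(x_1,x_2,t)\,dx_2$, which is nonnegative (this uses the positivity of $\rho$ on the invariant square $D$ at \emph{all} times), vanishes at $x_1=0$, and has mass bounded below via the conserved quantity $\int_D\rho^{1/3}dx$ and H\"older; a real-variable 1D lemma (a pinned zero plus $L^2$-closeness to a positive mean forces a large $C^\gamma$ seminorm, hence a large $\dot H^{\gamma+1/2}(\T)$ norm by Sobolev embedding, with interpolation for large exponents) then converts smallness of $\delta(t)$ into largeness of $\|\partial_{x_1}\rho\|_{\dot H^s}$, restricted to the $k_2=1$ line of frequencies. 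You instead keep all modes and use the conserved \emph{linear} functional $m_0=\int_D\rho\,dx$, eliminate the $k_1=0$ modes through the constraint $\rho(0,\cdot,t)\equiv 0$ via the expansion in $\cos(nx_1)-1$, and run a frequency-split Cauchy--Schwarz against $\|\partial_{x_1}\rho\|_{\dot H^{-1}}$ and $\|\partial_{x_1}\rho\|_{\dot H^s}$, with the $|k_2|^{-1}$ gain from the half-period integral delivering exactly the threshold $s>-\tfrac12$ and the exponent $\tfrac{2}{2s+1}$; I checked the bookkeeping and the key two-norm estimate is correct, and the optimization in $R\ge1$ is legitimate since $\|\partial_{x_1}\rho\|_{\dot H^s}\ge\|\partial_{x_1}\rho\|_{\dot H^{-1}}$ on $\T^2$ for $s>-1$. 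Your route uses positivity only at $t=0$ (to guarantee $m_0\neq0$), is purely spectral, and obtains the sharp exponents from a single inequality; the paper's route is more geometric and leans harder on the persistence of positivity. One detail to make explicit: in the low-frequency block you must exploit the anisotropy, e.g. $\frac{|k|^2}{n^2 k_2^2}\le k_2^{-2}+n^{-2}$, to get the factor $R^{1/2}$ --- merely bounding the ratio by a constant and counting $\sim R^2$ lattice points gives $R$, which after optimization yields only the weaker exponent $\tfrac{4}{2s+1}$; also record that the boundary terms at $x_2=\pm\pi$ in the computation of $E'(t)$ vanish because oddness and periodicity force $\rho=0$ there, as in the paper's Lemma~\ref{energy_lemma}.
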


\begin{remark}\label{rmk_strip}
Observe that the solution $\rho(\cdot,t)$ in $\T^2$ from Theorem~\ref{thm_periodic} is automatically a solution in the bounded strip $\T \times[-\pi,\pi]$, with $u$ satisfying the no-flow condition on the top and bottom boundaries.
(This is because $\rho(\cdot,t)$ is odd in $x_2$ and has period $2\pi$ in $x_2$ for all times. Thus $\rho(\cdot,t)$ is also odd about $x_2=\pm\pi$, implying $u(\cdot,t)\cdot e_2=0$ for all times on $x_2=\pm \pi$.)
Therefore, the growth results of Theorem~\ref{thm_periodic} directly hold in scenario (S3). We note that the local well-posedness for the scenario (S3) has been
established in \cite{CCL}, which in particular ensures the uniqueness of solution while it remains regular.
\end{remark}

\subsection{Nonlinear instability in IPM}

One can easily check that any horizontal stratified state $\rho_s=g(x_2)$ is a stationary solution of \eqref{ipm} in $\R^2$, $\T^2$ or $S$, since  $u=\nabla^\perp (-\Delta_\Omega)^{-1} \partial_{x_1} \rho \equiv 0$. (As we will see in Lemma~\ref{energy_lemma}, all smooth stationary solutions in $S$ are of the form $\rho_s=g(x_2)$. However, in $\R^2$ and $\T^2$ there are other smooth stationary solutions, e.g. any vertical stratified state is also stationary; see also \cite[Section 5]{CLV} for smooth stationary solutions in $\R^2$ supported in an infinite slanted strip.)

Below we briefly summarize the previous stability results for the horizontal stratified state $\rho_s = g(x_2)$.
Denoting $\eta(x,t):=\rho(x,t)-\rho_s(x)$ and plugging it into \eqref{ipm}, $\eta$ satisfies
\begin{equation}\label{IPM_perturb}
\partial_t \eta + u \cdot\nabla\eta = -g'(x_2) u_2,
\end{equation}
with $u=\nabla^\perp (-\Delta_\Omega)^{-1}\partial_{x_1}\eta$. For $\eta$ small, the linearized equation is $\partial_t \eta = -g'(x_2) u_2$, which can be written as
\[
\partial_t \eta = -g'(x_2) (-\Delta_\Omega)^{-1}\partial^2_{x_1}\eta.
\]
Since $(-\Delta_\Omega)^{-1}\partial^2_{x_1}$ is a negative operator, one would expect the equation to be linearly stable if $g'$ is uniformly negative (i.e. lighter density on top, heavier on the bottom).

For the stratified state $\rho_s=-x_2$, the asymptotic stability of the nonlinear equation \eqref{IPM_perturb} has been rigorously established by Elgindi \cite{ElgindiIPM} in $\R^2$ and Castro--C\'ordoba--Lear \cite{CCL} in $S$,
which also implies the global well-posedness of \eqref{ipm} for initial data close to $\rho_s=-x_2$ in certain Sobolev spaces. More precisely, for $\rho_s=-x_2$ in $\R^2$, if $\|\eta\|_{W^{4,1}(\R^2)}+\|\eta\|_{H^s(\R^2)}<\epsilon\ll 1$  for $s\geq 20$, \cite[Theorem 1.3]{ElgindiIPM} proved that $\eta$ remains regular for all time and satisfies $\|\eta(t)\|_{H^3}\lesssim \epsilon t^{-1/4}$ for all $t> 0$.  \cite{ElgindiIPM} also obtained asymptotic stability results for periodic perturbation $\eta \in H^s(\T^2)$, where $\rho_s$ is still $-x_2$ in the whole plane. In \cite{CCL}, the authors proved that for the stratified state $\rho_s=-x_2$ in $S$ is asymptotically stable in $H^s(S)$ for $s\geq 10$, although it may converge to a slightly different stratified state from $\rho_s$ as $t\to\infty$.

In this paper, we aim to prove two nonlinear instability results for the horizontal stratified steady state $\rho_s=g(x_2)$ in $\T^2$ and $S$ respectively.
What sets our approach apart is that we are not following the common path of converting linear instability into a nonlinear one. Rather, we use the monotone quantity - potential energy - to prove infinite-in-time growth of Sobolev norms and then leverage these results to conclude the nonlinear instability.
Our first instability result shows that in $\T^2$, \emph{any} horizontal stratified steady state $\rho_s$ that is odd in $x_2$ is nonlinearly unstable, and the instability can grow ``infinitely in time''.  Namely, for any arbitrarily large $k>0$, one can construct an initial data $\rho_0$ that is arbitrarily close to $\rho_s$ in $H^k$, such that $\limsup_{t\to\infty}\|\rho(t)-\rho_s\|_{\dot{H}^s(\T^2)}=\infty$ for all $s>1$.

\begin{theorem}\label{thm_instability_torus}
Let $\rho_s\in C^\infty(\T^2)$ be any horizontal stratified state (i.e. $\rho_s(x) = g(x_2)$) that is odd in $x_2$. For any $\epsilon>0$ and any $k>0$, there exists an initial data $\rho_0 \in C^\infty(\T^2)$ satisfying
\[
\|\rho_0 - \rho_s\|_{H^k(\T^2)} \leq \epsilon,
\]
such that the solution $\rho(\cdot,t)$ to \eqref{ipm} with initial data $\rho_0$ (provided it remains smooth for all times) satisfies
\begin{equation}\label{instab_torus_eq}
\limsup_{t\to\infty} t^{-\frac{s}{2}} \|\rho(t)-\rho_s\|_{\dot{H}^{s+1}(\T^2)} =\infty \quad\text{ for all }s> 0.
\end{equation}

\end{theorem}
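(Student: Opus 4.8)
The plan is to run, at the level of $\rho$ itself, the same scheme underlying Theorem~\ref{thm_periodic}: produce a bounded monotone quantity (the potential energy), deduce an integrated bound on the dissipation $\|u(t)\|_{L^2}^2$, and convert it into growth of Sobolev norms by interpolation, using a uniform‑in‑time lower bound on $\|\partial_{x_1}\rho(t)\|_{L^2}$. The twist is that the perturbation $\eta:=\rho-\rho_s$ does \emph{not} solve \eqref{ipm} (it solves \eqref{IPM_perturb}), so Theorem~\ref{thm_periodic} cannot be applied to $\eta$ directly; instead I keep $\rho$ as the evolving object and use the elementary identities $\partial_{x_1}\rho=\partial_{x_1}\eta$ and $u=\nabla^\perp(-\Delta_{\T^2})^{-1}\partial_{x_1}\rho$, so that the dynamical information extracted about $\rho$ transfers to $\eta$.

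First I would fix the data. Let $c_1:=\max_{x_2\in[0,\pi]}g(x_2)$; since $g$ is odd, $g(0)=0$, so $c_1\ge0$, and by continuity of $g$ one may pick $x_2^0\in(0,\pi)$ with $g(x_2^0)$ as close to $c_1$ as desired. Take a nonnegative bump $\psi\in C_c^\infty$ centered at some $(x_1^0,x_2^0)$ with $x_1^0\in(0,\pi)$, of support so small that $\psi$ together with its three reflections across the lines $\{x_1=0\}$ and $\{x_2=0\}$ have pairwise disjoint supports inside $\T^2$, none meeting $\{x_1\in\pi\mathbb Z\}$ or $\{x_2\in\pi\mathbb Z\}$, and set
\[
\eta_0:=\tau\big[\psi(x_1,x_2)+\psi(-x_1,x_2)-\psi(x_1,-x_2)-\psi(-x_1,-x_2)\big],\qquad \rho_0:=\rho_s+\eta_0 .
\]
Then $\eta_0\in C^\infty(\T^2)$ is even in $x_1$, odd in $x_2$, and $\|\eta_0\|_{H^k}\le\epsilon$ once $\tau$ is small. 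On $D=[0,\pi]^2$ only the first summand lives, so $\rho_0=g$ on $\{x_1=0\}\cup\{x_1=\pi\}$, while $\rho_0(x_1^0,x_2^0)=g(x_2^0)+\tau>c_1$ provided $x_2^0$ was taken with $g(x_2^0)>c_1-\tau$. Hence there are $\lambda>c_1$ and $a>0$ with $\big|\{x\in D:\rho_0(x)>\lambda\}\big|=a$.

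Next, oddness in $x_2$ and evenness in $x_1$ are both propagated by \eqref{ipm}, so for all $t$ the field $u$ is tangent to $\{x_1\in\pi\mathbb Z\}$ and to $\{x_2\in\pi\mathbb Z\}$; consequently $D$ is invariant under the area‑preserving flow of $u$, and $\rho(\cdot,t)|_D$ is an area‑preserving rearrangement of $\rho_0|_D$, so $\big|\{x\in D:\rho(x,t)>\lambda\}\big|=a$ for every $t$. Also $\{x_1=0\}$ is invariant and the trace $\rho(0,\cdot,t)$ solves a one‑dimensional transport equation driven by $u_2(0,\cdot,t)$, whose flow is an orientation‑preserving homeomorphism of $[0,\pi]$ fixing $0$ and $\pi$ (here the standing Lipschitz assumption on $u$ is used); since $\rho_0(0,\cdot)=g$, this gives $\rho(0,x_2,t)\le c_1<\lambda$ for all $x_2\in[0,\pi]$ and $t\ge0$. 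Fix $t$: for each height $x_2$ such that the slice $\{x_1:\rho(x_1,x_2,t)>\lambda\}$ has positive length, integrating $\partial_{x_1}\rho$ in $x_1$ from $0$ (where $\rho\le c_1$) to a point where $\rho>\lambda$ gives $\int_0^\pi|\partial_{x_1}\rho(x_1,x_2,t)|\,dx_1\ge\lambda-c_1$, and the set of such heights has measure at least $a/\pi$, so $\|\partial_{x_1}\rho(\cdot,t)\|_{L^1(D)}\ge(\lambda-c_1)a/\pi$ and therefore $\|\partial_{x_1}\rho(\cdot,t)\|_{L^2(\T^2)}\ge c_0>0$ for all $t$, with $c_0$ depending only on $\rho_0$. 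On the other hand the potential energy $E(t):=\int_{\T^2}x_2\rho(x,t)\,dx$ satisfies $\tfrac{d}{dt}E(t)=\int_{\T^2}u_2\rho\,dx=-\|u(t)\|_{L^2}^2\le0$ and $|E(t)|\le\pi\,|\T^2|\,\|\rho_0\|_{L^\infty}$ (since $\|\rho(t)\|_{L^\infty}$ is conserved), so $\int_0^\infty\|u(t)\|_{L^2}^2\,dt<\infty$. Combining $\|u(t)\|_{L^2}=\|\partial_{x_1}\rho(t)\|_{\dot H^{-1}}$ with the interpolation $\|f\|_{L^2}\le\|f\|_{\dot H^{-1}}^{s/(s+1)}\|f\|_{\dot H^s}^{1/(s+1)}$ for mean‑zero $f$, together with $\|\partial_{x_1}\rho(t)\|_{\dot H^s}\le\|\rho(t)\|_{\dot H^{s+1}}$ and the lower bound just obtained, yields for every $s>0$
\[
\|u(t)\|_{L^2}^2\ \ge\ c_0^{\,2(s+1)/s}\,\|\rho(t)\|_{\dot H^{s+1}(\T^2)}^{-2/s},
\]
hence $\int_0^\infty\|\rho(t)\|_{\dot H^{s+1}}^{-2/s}\,dt<\infty$. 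This is incompatible with $\|\rho(t)\|_{\dot H^{s+1}}\lesssim t^{s/2}$ for all large $t$, so $\limsup_{t\to\infty}t^{-s/2}\|\rho(t)\|_{\dot H^{s+1}}=\infty$; since $\|\rho_s\|_{\dot H^{s+1}}$ is a fixed finite constant, this is exactly \eqref{instab_torus_eq}.

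The step I expect to be the main obstacle is the uniform bound $\|\partial_{x_1}\rho(t)\|_{L^2}\ge c_0$. For the perturbation $\eta$ neither positivity on $D$ nor vanishing on $\{x_1=0\}$ is preserved, so the clean mechanism of Theorem~\ref{thm_periodic} is unavailable; the construction is designed so that the mechanism survives at the level of $\rho$: the trace of $\rho$ on the invariant segment $\{x_1=0\}\cap D$ can never exceed $\max_{[0,\pi]}g$, whereas the bump, placed near the argmax of $g|_{[0,\pi]}$, forces $\rho_0$ — and hence, by area‑preserving rearrangement on the invariant square $D$, $\rho(\cdot,t)$ for all $t$ — to overshoot that value on a set of fixed positive measure, which pins down a fixed amount of horizontal variation. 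Making rigorous the invariance of $D$, the one‑dimensional reduction on $\{x_1=0\}$, and the measure preservation under only Lipschitz regularity is where the real work lies; the monotonicity and boundedness of $E$ and the interpolation step are routine.
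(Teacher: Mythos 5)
Your scheme (symmetric bump, invariance of $D=[0,\pi]^2$ and of the line $\{x_1=0\}$, preserved super-level-set measure, comparison with the trace values, then the energy/interpolation machinery) is internally coherent, and the dynamical part — monotone potential energy, the uniform lower bound on $\|\partial_{x_1}\rho\|_{L^2}$ implying \eqref{periodic_int_bubbles}-type growth — mirrors the paper's general strategy. But the initial-data construction has a genuine gap that kills the "for any $k$" part of the statement. Your mechanism forces the bump to overshoot $c_1=\max_{[0,\pi]}g$, because the trace on the invariant segment $\{x_1=0\}\times[0,\pi]$ is merely a rearrangement of $g|_{[0,\pi]}$ and can therefore take values as large as $c_1$ at \emph{any} height; so you need $\rho_0>\lambda>c_1$ somewhere in the open square. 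Since $g$ is odd and periodic, $g(0)=g(\pi)=0$, and there are admissible states for which $c_1=0$ is attained \emph{only} at the endpoints, e.g.\ $g(x_2)=-\sin x_2$. For such $g$ your perturbation $\eta_0$ (odd in $x_2$, hence vanishing on $x_2=0$ and $x_2=\pi$) must satisfy $\eta_0(x^0)>\sin x_2^0\geq \frac{2}{\pi}\min(x_2^0,\pi-x_2^0)$ at some interior point, while $\eta_0$ vanishes on the nearer of the two lines; the mean value theorem then gives $\|\partial_{x_2}\eta_0\|_{L^\infty}\geq \frac{2}{\pi}$, and by the embedding $H^k(\T^2)\hookrightarrow C^1(\T^2)$ for $k>2$ this contradicts $\|\eta_0\|_{H^k}\leq\epsilon$. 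So no choice of $\tau$, $x_2^0$ and bump radius can make your data $H^k$-close to $\rho_s$ for $k\geq 3$ in this case: the obstruction is not technical (flow-map rigor), but structural — value overshoot of $\max_{[0,\pi]}g$ is incompatible with arbitrarily $C^1$-small odd perturbations whenever the maximum is attained only on $\{x_2\in\{0,\pi\}\}$.

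This is exactly the difficulty the paper's proof is built to avoid. There, one does not perturb the values at all: one picks $h_0\in(0,\pi)$ with $g'(h_0)>0$ and applies a small volume-preserving local rotation near $(0,h_0)$ (extended oddly), producing data that is a rearrangement of $\rho_s$ by a diffeomorphism preserving $\T\times\{\pi\}$ (a ``layered'' state in the sense of Definition~\ref{def_layer}) with potential energy \emph{strictly below} $E_s$ at second order in the rotation parameter. The uniform lower bound on $\|\partial_{x_1}\rho(t)\|_{L^1}$ then comes from Proposition~\ref{prop_layer}: the energy gap $E_s-E(t)\geq b>0$ plus the preserved layered structure pins down a fixed amount of horizontal variation, with no reference to the maximum of $g$. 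Since the perturbation is obtained by flowing $\rho_s$ along a fixed smooth field for a short time, it is small in every $H^k$, which is what your value-comparison mechanism cannot deliver in general. (For the subclass of $g$ whose maximum over $[0,\pi]$ is attained at an interior point — equivalently $c_1>0$ — your argument does appear to work and is somewhat more elementary; but as a proof of the theorem as stated it is incomplete.)
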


Finally, we prove an instability result in the bounded strip $S=\T\times[-\pi,\pi]$ for \emph{any} stratified steady state $\rho_s\in C^\infty(S)$, including those monotone stratified states that are linearly stable such as $\rho_s = -x_2$. Namely,
we can construct a smooth perturbation small in $H^{2-\gamma}$ norm for any $\gamma>0$, such that $\limsup_{t\to\infty}\|\rho(t)-\rho_s\|_{\dot{H}^{s}(S)}=\infty$ for any $s>1$.

\begin{theorem}\label{thm_instability_strip}
Let $\rho_s\in C^\infty(S)$ be any stationary solution. For any $\epsilon,\gamma>0$, there exists an initial data $\rho_0 \in C^\infty(S)$ satisfying
\begin{equation}\label{eq_difference}
\|\rho_0 - \rho_s\|_{H^{2-\gamma}(S)} \leq \epsilon,
\end{equation}
such that the solution $\rho(\cdot,t)$ to \eqref{ipm} with initial data $\rho_0$ (provided it remains smooth for all times) satisfies
\begin{equation}\label{eq_instability_strip}
\limsup_{t\to\infty} t^{-\frac{s}{2}} \|\rho(t)-\rho_s\|_{\dot{H}^{s+1}(S)} =\infty \quad\text{ for all }s> 0.
\end{equation}
\end{theorem}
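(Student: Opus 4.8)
The plan is to follow the route of Theorems~\rf{thm_whole_space} and~\rf{thm_periodic}: extract a finite ``budget'' from the monotonicity of the potential energy, and show that a solution growing too slowly in $\dot H^{s+1}$ would have to spend infinitely much of it. First, two reductions. By Lemma~\rf{energy_lemma} every smooth stationary solution in $S$ has the form $\rho_s=g(x_2)$. Since $\|\cdot\|_{H^{2-\gamma}(S)}\le\|\cdot\|_{H^{2-\gamma'}(S)}$ for $0<\gamma'<\gamma$, it suffices to treat $\gamma\in(0,1)$, in which case $H^{2-\gamma}(S)\hookrightarrow C^0(S)$, so the perturbation we build is also $L^\infty$--small. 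Write $\rho_0=\rho_s+\eta_0$; the (assumed global and smooth) solution is $\rho(t)=\rho_s+\eta(t)$, and since $\rho_s$ depends only on $x_2$, $\partial_{x_1}\rho(t)=\partial_{x_1}\eta(t)$. Now set $E(t):=\int_S\rho(t)\,x_2\,dx$. Using $\partial_t\rho=-u\cdot\nabla\rho$, $\nabla\cdot u=0$, $u\cdot n=0$ on $\partial S$, and $u_2=\partial_{x_1}(-\Delta_S)^{-1}\partial_{x_1}\rho$, integration by parts gives $\frac{d}{dt}E(t)=\int_S\rho\,u_2\,dx=-\|\partial_{x_1}\rho(t)\|_{\dot H^{-1}(S)}^2\le0$; and since $\rho(t)$ is transported, $\|\rho(t)\|_{L^\infty}=\|\rho_0\|_{L^\infty}$, so $|E(t)|\le\|\rho_0\|_{L^\infty}\int_S|x_2|\,dx$ is bounded uniformly in $t$. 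Hence
\[
\int_0^\infty\big\|\partial_{x_1}\rho(t)\big\|_{\dot H^{-1}(S)}^2\,dt\ =\ E(0)-\lim_{t\to\infty}E(t)\ =:\ B\ <\ \infty .
\]

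Next, the key interpolation. Let $\cS\subset L^2(S)$ be the closed subspace of functions of $x_2$ alone, whose $L^2$--orthogonal projection is the $x_1$--average $f\mapsto\overline f^{\,x_1}$, and put $Q(t):=\operatorname{dist}_{L^2(S)}(\rho(t),\cS)=\|\rho(t)-\overline{\rho(t)}^{\,x_1}\|_{L^2(S)}$. The function $w:=\rho(t)-\overline{\rho(t)}^{\,x_1}$ has zero $x_1$--average and $\partial_{x_1}w=\partial_{x_1}\rho$. Expanding $w$ in the eigenbasis of $-\Delta_S$: on a mode with $x_1$--frequency $\xi_1$ and $-\Delta_S$--eigenvalue $\mu$ we have $\xi_1\ne0$, so $\mu\ge\xi_1^2\ge1$, and with $\theta:=\tfrac{s}{s+1}$ the identity $s(1-\theta)=\theta$ gives $(\xi_1^2\mu^{-1})^{\theta}(\xi_1^2\mu^{s})^{1-\theta}=\xi_1^2\ge1$; multiplying by $|\widehat w|^2$ and summing with H\"older, and using $\xi_1^2\mu^s\le\mu^{s+1}$ together with $\partial_{x_1}\rho=\partial_{x_1}(\rho-\rho_s)$, we obtain for every $s>0$
\[
Q(t)^2\ \le\ \big\|\partial_{x_1}\rho(t)\big\|_{\dot H^{-1}(S)}^{2\theta}\,\big\|\partial_{x_1}\rho(t)\big\|_{\dot H^{s}(S)}^{2(1-\theta)}\ \le\ \big\|\partial_{x_1}\rho(t)\big\|_{\dot H^{-1}(S)}^{2\theta}\,\big\|\rho(t)-\rho_s\big\|_{\dot H^{s+1}(S)}^{2(1-\theta)} .
\]

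The crux is to choose $\eta_0\in C^\infty(S)$ with $\|\eta_0\|_{H^{2-\gamma}(S)}\le\epsilon$ so that the resulting solution stays a fixed positive $L^2$--distance from $\cS$, i.e. $c:=\inf_{t\ge0}Q(t)>0$; this also yields \rf{eq_difference}. Granting it, suppose toward a contradiction that $\limsup_{t\to\infty}t^{-s/2}\|\rho(t)-\rho_s\|_{\dot H^{s+1}(S)}<\infty$ for some $s>0$. Then, absorbing the finite-time part into the constant by continuity of the smooth flow, $\|\rho(t)-\rho_s\|_{\dot H^{s+1}(S)}\le M(1+t)^{s/2}$ for all $t\ge0$ and some $M<\infty$. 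Inserting this and $Q(t)\ge c$ into the interpolation inequality and solving for the $\dot H^{-1}$ factor, using $\tfrac{1-\theta}{\theta}=\tfrac1s$ (so $s\cdot\tfrac{1-\theta}{\theta}=1$), gives
\[
\big\|\partial_{x_1}\rho(t)\big\|_{\dot H^{-1}(S)}^2\ \ge\ c^{\,2/\theta}\,M^{-2/s}\,(1+t)^{-1}\qquad(t\ge0),
\]
whence $B\ge c^{\,2/\theta}M^{-2/s}\int_0^\infty(1+t)^{-1}\,dt=\infty$, contradicting $B<\infty$. Since $s>0$ was arbitrary, this proves \rf{eq_instability_strip}, completing the argument modulo the choice of $\eta_0$.

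The construction of $\eta_0$ is the main obstacle, and it cannot rest on conservation laws alone: at every time $\rho(t)$ is a rearrangement of $\rho_0$, and $\rho_0$ can itself be rearranged into a stratified state (e.g.\ its decreasing rearrangement $\rho_0^\downarrow$) with potential energy $\le E(0)$, so neither the conserved distribution function nor the monotone $E$ by itself forbids $Q(t)\to0$. One must exploit the IPM dynamics: we choose $\eta_0$ to encode a configuration of ``misplaced'' density --- heavier fluid above lighter fluid, relative to $g$, at prescribed interior heights --- engineered so that the solution provably cannot relax to a stratified profile, i.e. $Q(t)\not\to0$; combined with continuity of $Q$ and the fact that $Q(t)>0$ at every finite time (a smooth solution touching $\cS$ at a finite time would, by backward uniqueness for \rf{ipm}, have started in $\cS$, contrary to $\eta_0\notin\cS$), this upgrades to $\inf_{t\ge0}Q(t)>0$. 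Proving the non-relaxation --- e.g.\ via a compactness argument that pins down the possible asymptotic states from the distribution function of $\rho_0$ and the limiting value of $E$, and then excludes them for the chosen $\eta_0$ --- while respecting $\|\eta_0\|_{H^{2-\gamma}(S)}\le\epsilon$, is the technical heart of the proof.
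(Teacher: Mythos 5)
There is a genuine gap, and you have correctly identified where it is: everything in your argument reduces to producing a smooth perturbation $\eta_0$, small in $H^{2-\gamma}(S)$, for which the solution stays a uniform positive $L^2$--distance from the stratified states (equivalently, $\|\partial_{x_1}\rho(t)\|_{L^2(S)}\geq c>0$ for all $t$), but that step --- which you call ``the technical heart of the proof'' --- is exactly what you do not prove. Your surrounding scaffolding (the energy budget $\int_0^\infty\|\partial_{x_1}\rho\|_{\dot H^{-1}}^2\,dt<\infty$ from Lemma~\ref{energy_lemma}, the spectral interpolation $\|\partial_{x_1}\rho\|_{L^2}^2\lesssim\|\partial_{x_1}\rho\|_{\dot H^{-1}}^{2s/(s+1)}\|\rho-\rho_s\|_{\dot H^{s+1}}^{2/(s+1)}$, and the contradiction with $\int(1+t)^{-1}dt=\infty$) matches the paper's strategy, but without the uniform lower bound the theorem is not proved. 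Moreover, the route you sketch for closing the gap (backward uniqueness, compactness, classifying possible asymptotic states from the distribution function and the limiting energy) is not how the paper proceeds and would be substantially harder; non-relaxation in a weak sense does not by itself give a quantitative, time-uniform lower bound on $Q(t)$, which is what your contradiction argument needs.

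The paper closes this gap with a soft topological ``bubble'' argument (Proposition~\ref{prop_bubble} and Corollary~\ref{cor_bubble}). Take $\rho_0=\rho_s+2A\lambda\,\varphi(\lambda^{-1}x)$ with $\varphi$ a fixed bump, $\varphi(0)=1$, and $A=\|\nabla\rho_s\|_{L^\infty}$. Since the bump's amplitude $2A\lambda$ exceeds the oscillation $A\lambda$ of $\rho_s$ over $B(0,\lambda)$, Sard's theorem produces a regular level value $h$ whose level set has a closed connected component $\Gamma_0\subset B(0,\lambda)$ enclosing a simply-connected region, with a second nested closed level curve $\Gamma_1$ at a different value $c_1\neq c_0$. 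Because the flow map $\Phi_t$ is a measure-preserving diffeomorphism, $\Phi_t(\Gamma_1)$ stays inside the region bounded by $\Phi_t(\Gamma_0)$, the vertical projection of the inner region has measure at least $m(D_1)/(2\pi)$ for all $t$, and on every horizontal line through that projection $\rho(\cdot,x_2,t)$ attains both $c_0$ and $c_1$; hence $\int_S|\partial_{x_1}\rho(x,t)|\,dx\geq m(D_1)|c_1-c_0|/(2\pi)$ for all $t$, which by Cauchy--Schwarz gives the needed uniform $L^2$ lower bound. Note also that this construction explains the $H^{2-\gamma}$ threshold in the statement: the amplitude must scale like the width $\lambda$ to create the closed level curve, so $\|\rho_0-\rho_s\|_{H^2}$ stays of order one while $\|\rho_0-\rho_s\|_{H^{2-\gamma}}=O(\lambda^{\gamma})\to0$; your proposal gives no mechanism that would produce such a perturbation or account for this exponent.
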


\begin{remark}It is a natural question whether the perturbation can be made arbitrarily small in higher Sobolev spaces. While it is unclear to us whether $H^{2}$ is the sharp threshold, we know that the exponent cannot exceed 10: for $\rho_s=-x_2$, if the initial perturbation is small in $H^{10}$ or above,  \cite{CCL} showed $\|\rho(t)-\rho_s\|_{H^3(S)}$ remains uniformly bounded in time.
\end{remark}

\subsection{Organization of the paper}
In Section~\ref{prelim} we discuss some preliminaries and the local well-posedness results in the scenarios (S1)--(S3). In  Section  3  we show the monotonicity of the potential energy in the three scenarios, and use it to prove the infinite-in-time growth results in Theorems~\ref{thm_whole_space}--\ref{thm_periodic}. We take a brief detour in Section 4 to derive some infinite-in-time growth results for less restrictive initial data, which we call the ``bubble'' solution and the ``layered'' solution. This will enable us to obtain nonlinear instability results in Section~\ref{sec_instability} for initial data close to stratified steady states, where we prove Theorems~\ref{thm_instability_torus}--\ref{thm_instability_strip}.
\color{black}

\section{Preliminaries on problem setting and local well-posedness}\label{prelim}
In this section, we discuss some preliminaries such as the Sobolev spaces for the spatial domains $\mathbb{R}^2$, $\T^2$ and $S=\T\times[-\pi,\pi]$ respectively, as well as the local-wellposedness results for the IPM equation \eqref{ipm}. For the whole space and strip case, the local-wellposedness theory have already been established in \cite{CG} and \cite{CCL} respectively. For the torus case we are unable to locate a local-wellposedness result, so we give a short proof in Section~\ref{sec_torus}.

\subsection{Sobolev norms and local well-posedness in $\R^2$} For any $f\in L^2(\R^2)$, its Fourier transform is defined as usual as
\[
\hat f(\xi) := \frac{1}{2\pi} \int_{\R^2} e^{-i\xi\cdot x} f(x) dx \quad\text{ for }\xi \in \R^2,
\]
and the Plancherel theorem yields
$
\|\hat f\|_{L^2(\R^2)}^2 = \| f\|_{L^2(\R^2)}^2.
$
As usual, we define
\[
\|f\|_{\dot{H}^s(\R^2)}^2 := \int_{\R^2} |\xi|^{2s} |\hat f(\xi)|^2 d\xi \quad\text{ for any }s\neq 0,
\]
and
\[\|f\|_{{H}^s(\R^2)}^2 :=  \int_{\R^2} (1+|\xi|^2)^s |\hat f(\xi)|^2 d\xi \quad\text{ for any }s\in \R.
\]

For \eqref{ipm} in $\R^2$, C\'ordoba--Gancedo--Orive \cite[Theorem 3.2]{CGO} proved local-wellposedness  for initial data $\rho_0\in H^s(\R^2)$ with $s>2$. They also established a regularity criteria, showing that $\rho(t)$ remains regular as long as $\int_0^t \|\nabla\rho(t)\|_{BMO\,} ds < \infty$.
They also obtained another regularity criteria with a geometric flavor, and we refer the reader to \cite[Theorem 3.4]{CGO} for details.

\subsection{Sobolev norms and local well-posedness in $\mathbb{T}^2$}
\label{sec_torus}

For any $f\in L^1(\T^2)$, let us denote its Fourier series as
\begin{equation}\label{fourier_periodic_1}
f(x) = \sum_{k\in\Z^2} \hat f(k) e^{ik\cdot x},
\end{equation}
where the Fourier coefficient $\hat f(k)$ for $k=(k_1,k_2)\in \Z^2$ is given by
\begin{equation}\label{fourier_periodic_2}
\hat f(k)=\frac{1}{(2\pi)^2} \int_{\T^2} e^{-ik\cdot x} f(x) dx.
\end{equation}
By Parseval's theorem, for any $f,g\in L^2(\T^2)$ we have $\int_{\T^2} f(x)\overline{g(x)} dx = (2\pi)^2  \sum_{k\in \Z^2} \hat f(k)\overline{\hat g(k)},$ which in particular implies
\begin{equation}\label{fourier_periodic_3}
\|f\|_{L^2(\T^2)}^2 = (2\pi)^2 \sum_{k\in \Z^2} |\hat f(k)|^2.
\end{equation}
For $s\neq 0$, throughout this paper, $\|f\|_{\dot{H}^s(\T^2)}$ is defined by
\begin{equation}\label{fourier_periodic_4}
\|f\|_{\dot{H}^s(\T^2)}^2 = (2\pi)^2 \sum_{k\in \Z^2\setminus\{(0,0)\}} |k|^{2s}|\hat f(k)|^2.
\end{equation}
Finally, for mean-zero $f$ (in particular this is the case for $\partial_{x_1}\rho$), its inverse Laplacian is given by
$(-\Delta)^{-1} f = \sum_{k\in\Z^2\setminus\{(0,0)\}}  |k|^{-2} \hat f(k) e^{ik\cdot x}$.

Below we sketch a-priori estimates that can be used to establish local regularity as well as conditional criteria
for global regularity. With these estimates, a fully rigorous argument can be given in a standard way, using either smooth mollifier
approximations like in \cite{MB} or Galerkin approximations.

Suppose that $\rho(x,t)$ is a smooth solution of $\partial_t \rho + (u \cdot \nabla) \rho=0,$ $u = \nabla^\perp (-\Delta)^{-1} \partial_{x_1} \rho,$ where $\nabla^\perp = (-\partial_{x_2}, \partial_{x_1}).$
Observe that all the $L^p$ norms of $\rho$ are conserved by evolution. 
Multiplying the equation by $(-\Delta)^s \rho$ and integrating we obtain
\[ \frac12 \partial_t \|\rho\|^2_{\dot{H}^s} + \int_{\T^2} (u \cdot \nabla) \rho (-\Delta)^s \rho\, dx=0. \]
Here $s \geq 1$ is an integer. In the integral above, we can expand the power of the Laplacian and integrate by parts, then use the periodicity to transfer exactly half of the derivatives on $u\cdot\nabla \rho$. What we get is a sum of terms of the form
\[ \int_{\T^2} D^s ((u \cdot \nabla) \rho) D^s \rho\,dx, \]
where $D^s$ stands for some partial derivative of order $s.$ Next we apply Leibniz rule to open up the derivative $D^s$ falling on $(u \cdot \nabla) \rho.$
Note that when all derivatives fall on $\rho,$ we get
\[ \int_{\T^2} (u \cdot \nabla) D^s \rho D^s \rho \,dx = \frac12 \int_{\T^2} (u \cdot \nabla) (D^s \rho)^2 \,dx =0 \]
due to incompressibility of $u.$ Therefore we obtain a sum of the terms of the form
\[ \int_{\T^2} D^j u D^{s-j+1} \rho D^s \rho \,dx \]
where $1 \leq j \leq s.$ Let us apply H\"older inequality to control such integral by
\[ \left| \int_{\T^2} D^j u D^{s-j+1} \rho D^s \rho \,dx \right| \leq \|D^j u \|_{L^{p_j}}\|D^{s-j+1} \rho\|_{L^{q_j}}\|D^s \rho\|_{L^2}, \]
where $p_j,$ $q_j$ satisfy $p_j^{-1}+q_j^{-1} = \frac12.$ Let us recall a particular case of Gagliardo-Nirenberg inequality \cite{Gag,Nir}
\begin{equation}\label{gn}
 \|D^j f\|_{L^q} \leq C \|f\|_{L^\infty}^{1-\theta} \|f\|_{\dot{H}^s}^\theta
\end{equation}
for any $f \in C_0^\infty(\R^2),$ where in the 2D case $\theta = \frac{j-\frac{2}{q}}{s-1}.$
The inequality is valid for $s>j,$ $0 <\theta < 1.$ While \eqref{gn} is usually stated in $\R^2,$ an extension to $\T^2$ is straightforward.
Taking now $p_j = \frac{2(s+1)}{j}$ and $q_j = \frac{2(s+1)}{s+1-j}$ and applying \eqref{gn}, we get
\[ \|D^j u \|_{L^{p_j}} \leq C\|D^j \rho\|_{L^{p_j}} \leq C \|\rho\|_{L^\infty}^{1-\frac{js}{s^2-1}} \|\rho\|_{\dot{H}^s}^{\frac{js}{s^2-1}}, \]
where in the first step we used $L^p-L^p$ bound on singular integral operators for $1<p<\infty.$ Similarly,
\[ \|D^{s+1-j} \rho \|_{L^{q_j}} \leq   C \|\rho\|_{L^\infty}^{1-\frac{(s+1-j)s}{s^2-1}} \|\rho\|_{\dot{H}^s}^{\frac{(s+1-j)s}{s^2-1}}. \]
Therefore, for all $1 \leq j \leq s$ we have
\[ \left| \int_{\T^2} D^j u D^{s-j+1} \rho D^s \rho \,dx \right| \leq C \|\rho\|_{L^\infty}^{\frac{s-2}{s-1}} \|\rho\|_{\dot{H}^s}^{\frac{2s-1}{s-1}}, \]
and hence
\begin{equation}\label{locHs}
 \partial_t \|\rho\|^2_{\dot{H}^s} \leq C \|\rho\|_{L^\infty}^{\frac{s-2}{s-1}} \|\rho\|_{\dot{H}^s}^{\frac{2s-1}{s-1}}.
\end{equation}
Such inequality can be used to show local well-posedness in $H^s$ provided that $s>2.$

To obtain a criteria for blow up, we can run a similar calculation but using $\|\nabla \rho\|_{L^\infty}$ and $\|\nabla u \|_{L^\infty}$ instead of $\|\rho\|_{L^\infty}$
and with $p_j= 2s/j,$ $q_j= 2s/(s-j).$ This way instead of \eqref{locHs} we obtain the differential inequality
\[  \partial_t \|\rho\|^2_{\dot{H}^s} \leq C \left( \|\nabla \rho\|_{L^\infty} +\|\nabla u\|_{L^\infty} \right) \|\rho\|_{\dot{H}^s}^2. \]
We can conclude control of $\|\rho\|_{\dot{H}^s}$ up to any time $T$ provided that $\int_0^T \left( \|\nabla \rho\|_{L^\infty} +\|\nabla u\|_{L^\infty} \right)\,dt$ remains finite.
Let us state a proposition summarizing the observations of this section.
\begin{proposition}\label{lwgr}
Consider the IPM equation \eqref{ipm} with the initial data $\rho_0 \in H^s(\T^2),$ $s>2$ an integer. Then there exists a time $T=T(\|\rho_0\|_{H^s})$ such that for all $0\leq t \leq T$
there exists a unique solution $\rho(x,t),u(x,t) \in C([0,T],H^s(\T^2)).$ Moreover, the solution blows up at time $T$ if and only if
\[  \int_0^t \left( \|\nabla \rho(\cdot, r)\|_{L^\infty(\T^2)} +\|\nabla u(\cdot, r)\|_{L^\infty(\T^2)} \right)\,dr \rightarrow \infty \]
when $t \rightarrow T.$
\end{proposition}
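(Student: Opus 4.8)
The plan is to convert the a priori bounds derived above into a rigorous existence, uniqueness and continuation statement by the vanishing--mollification scheme of \cite{MB} (a Galerkin truncation would work equally well). Fix a standard mollifier $\mathcal{J}_\eps$ on $\T^2$ and solve the regularized problem
\[
\partial_t \rho^\eps = -\,\mathcal{J}_\eps\!\big( (\mathcal{J}_\eps u^\eps)\cdot\nabla\,\mathcal{J}_\eps \rho^\eps \big), \qquad u^\eps = \nabla^\perp(-\Delta)^{-1}\partial_{x_1}\mathcal{J}_\eps \rho^\eps, \qquad \rho^\eps(\cdot,0)=\rho_0 .
\]
For each fixed $\eps$ the right-hand side is a locally Lipschitz map of $\rho^\eps$ on the Banach space $H^s(\T^2)$ --- mollification gains the derivatives lost, and $\nabla^\perp(-\Delta)^{-1}\partial_{x_1}$ is bounded on $H^s$ since its Fourier multiplier is bounded --- so Picard--Lindel\"of produces a unique maximal solution $\rho^\eps\in C^1([0,T_\eps);H^s)$. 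Because $\mathcal{J}_\eps$ is self-adjoint and $\mathcal{J}_\eps u^\eps$ is divergence free, $\|\rho^\eps(t)\|_{L^2}$ and $\int_{\T^2}\rho^\eps(t)\,dx$ are conserved, and the computation leading to \eqref{locHs}, which is untouched by inserting the mollifiers (they commute with $D^s$, are self-adjoint, and are bounded on $L^p$ for $1<p<\infty$), gives $\partial_t\|\rho^\eps\|_{\dot{H}^s}^2\leq C\|\rho^\eps\|_{L^\infty}^{\frac{s-2}{s-1}}\|\rho^\eps\|_{\dot{H}^s}^{\frac{2s-1}{s-1}}\lesssim\|\rho^\eps\|_{H^s}^3$ by Sobolev embedding. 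Comparison with a scalar ODE then yields a lifespan $T=T(\|\rho_0\|_{H^s})>0$ and a bound $M=M(\|\rho_0\|_{H^s})$, both independent of $\eps$, with $T_\eps\geq T$ and $\sup_{[0,T]}\|\rho^\eps\|_{H^s}\leq M$.

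Given these uniform bounds, the equation shows $\partial_t\rho^\eps$ is bounded in $H^{s-2}$ uniformly in $\eps$, so by the Aubin--Lions lemma a subsequence converges strongly in $C([0,T];H^{s'}(\T^2))$ for every $s'<s$ and weak-$\ast$ in $L^\infty([0,T];H^s)$. Strong convergence in, say, $C([0,T];H^{s-1})$ is more than enough to pass to the limit in the quadratic nonlinearity $(\mathcal{J}_\eps u^\eps)\cdot\nabla\mathcal{J}_\eps\rho^\eps$ and to remove the mollifiers, so the limit $\rho$ solves \eqref{ipm}, lies in $L^\infty([0,T];H^s)$, is weakly continuous into $H^s$, and conserves every $L^p$ norm. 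To promote weak to strong continuity in $H^s$ I would run the differential inequality \eqref{locHs} for $\rho$ itself to get $\limsup_{t\downarrow 0}\|\rho(t)\|_{\dot{H}^s}\leq\|\rho_0\|_{\dot{H}^s}$; combined with weak lower semicontinuity of the norm this gives right-continuity at $t=0$, and running the estimate backwards together with time translation upgrades this to $\rho\in C([0,T];H^s)$.

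Uniqueness and the blow-up criterion are then short. If $\rho_1,\rho_2\in C([0,T];H^s)$ solve \eqref{ipm} with the same data, set $\delta=\rho_1-\rho_2$; then $\partial_t\delta+u_1\cdot\nabla\delta=-(u_1-u_2)\cdot\nabla\rho_2$ with $u_1-u_2=\nabla^\perp(-\Delta)^{-1}\partial_{x_1}\delta$, and since $\partial_{x_1}\delta$ has zero mean the Biot--Savart operator acts boundedly on $L^2$, $\|u_1-u_2\|_{L^2}\leq\|\delta\|_{L^2}$. Pairing the $\delta$-equation with $\delta$ and using $\nabla\cdot u_1=0$ gives $\frac{d}{dt}\|\delta\|_{L^2}^2\leq C\|\nabla\rho_2\|_{L^\infty}\|\delta\|_{L^2}^2$, hence $\delta\equiv 0$ by Gr\"onwall. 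For the continuation criterion, rerunning the $\dot{H}^s$ estimate but peeling off only $\|\nabla\rho\|_{L^\infty}$ and $\|\nabla u\|_{L^\infty}$ (with $p_j=2s/j$, $q_j=2s/(s-j)$, as indicated in the text) yields $\partial_t\|\rho\|_{\dot{H}^s}^2\leq C(\|\nabla\rho\|_{L^\infty}+\|\nabla u\|_{L^\infty})\|\rho\|_{\dot{H}^s}^2$. By Gr\"onwall, if $\int_0^T(\|\nabla\rho\|_{L^\infty}+\|\nabla u\|_{L^\infty})\,dr<\infty$ then $\|\rho(t)\|_{H^s}$ stays bounded up to $T$, so $\rho(t)$ has a limit in $H^{s-1}$ as $t\uparrow T$ which lies in $H^s$, and the local existence theorem restarted there continues the solution beyond $T$; conversely, on any interval $[0,T]$ on which $\rho\in C([0,T];H^s)$ both $\nabla\rho$ and $\nabla u$ are continuous, hence bounded, so the integral is finite. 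This gives the asserted equivalence.

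The steps I expect to need the most care are the two I passed over quickly: first, the rigorous bookkeeping of the integration by parts and the Leibniz expansion that turns $\int_{\T^2}D^s((u\cdot\nabla)\rho)\,D^s\rho\,dx$ into $\sum_j\int_{\T^2}D^j u\,D^{s-j+1}\rho\,D^s\rho\,dx$ --- this is elementary for integer $s$ but must be written out, and one must check that the top term vanishes by incompressibility --- and second, the promotion of weak to strong $H^s$ time-continuity at the endpoint regularity. It is also worth emphasizing that, unlike 2D Euler in vorticity form, one cannot bound $\|\nabla u\|_{L^\infty}$ by $\|\nabla\rho\|_{L^\infty}$, because here the Biot--Savart kernel is a genuine singular integral operator, unbounded on $L^\infty$; this is precisely why the continuation criterion must retain both quantities (and why the related criterion of \cite{CGO} in $\R^2$ is phrased with a $BMO$ norm).
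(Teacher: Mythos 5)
Your proposal is correct and follows essentially the same route as the paper: the paper derives exactly the a priori bound \eqref{locHs} and the Gr\"onwall-type inequality in $\|\nabla\rho\|_{L^\infty}+\|\nabla u\|_{L^\infty}$, and then defers the actual construction to the standard mollifier/Galerkin machinery of \cite{MB}, which is precisely what you flesh out, including the $L^2$ uniqueness argument. The only phrase to treat with care is ``running the estimate backwards'': IPM is not time-reversible (the potential energy is strictly monotone), so this should be read as backward-in-time local solvability plus uniqueness (or a Bona--Smith argument), which is indeed available because the $H^s$ energy estimates are insensitive to the sign of time.
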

\bf Remarks. \rm 1. Uniqueness of the solution can be shown in a standard way; blow up is understood in the sense of leaving the class $C([0,T],H^s(\T^2)).$  \\
2. The Proposition can certainly be improved in terms of the condition on $s$ and the regularity criterion, but we do not pursue it in this paper.


\subsection{Sobolev norms and local well-posedness in a strip}
When the domain is a bounded strip $S:=\T \times [-\pi,\pi]$, due to the presence of the top and bottom boundaries, the functional spaces and the local-wellposedness results are  more involved than the periodic case. Below we briefly describe the results by Castro--C\'ordoba--Lear \cite{CCL}, and we refer the readers to the paper for more details.

\emph{Biot-Savart law and functional space.}
In the strip case, the velocity field $u$ is given by $u = \nabla^\perp \psi$, where the stream function $\psi$ solves the Poisson's equation with zero boundary condition (see \cite[Section 2.2]{CCL} for a derivation):
\begin{equation}\label{poisson}
\begin{cases}
-\Delta\psi(\cdot,t) = \partial_{x_1} \rho(\cdot,t) &\text{ in }S\times[0,T),\\
\psi(\cdot,t) = 0 & \text{ on }\partial S\times[0,T),
\end{cases}
\end{equation}
so that $u=\nabla^\perp(-\Delta_S)^{-1}\partial_{x_1}\rho$.
One can check that the operator $-\Delta_{S}$ (with zero boundary condition) is a positive self-adjoint operator, and it has a family of eigenfunctions $\{\omega_{p,q}\}_{p\in\Z, q\in\N}$ that form an orthonormal basis for $L^2(S)$, given by
\[
\omega_{p,q}(x):= a_p(x_1) b_q(x_2) \quad\text{ for } p\in \Z, q \in \N,
\]
where
\[
a_p(x_1) := e^{ipx_1} \quad\text{ with }x_1\in \T \text{ and }p\in \Z,
\]
and
\[
b_q(x_2):= \begin{cases} \cos(\frac{qx_2}{2}) & q \text{ odd,}\\ \sin(\frac{qx_2}{2}) & q \text{ even,}\end{cases} \text{ with } x_2\in[-\pi,\pi] \text{ for }q\in \N.
\]
 The eigenfunction expansion allows us to define $(-\Delta_S)^{s} f$ for any $s\in \R$ and $f\in L^2(S)$. We can then define the $\dot{H}^s$ homogenous Sobolev norm as
\begin{equation}\label{def_sobolev_strip}
\|f\|_{\dot{H}^s(S)}^2 := \int_{S} f (-\Delta_S)^{s}f dx \quad\text{ for } s\in \R,
\end{equation}
and one can check that the spaces $\dot{H}^s(S)$ and  $\dot{H}^{-s}(S)$ are dual
with respect to the $L^2$ norm
(see e.g. \cite{Simon} for the general construction of a scale of Sobolev spaces associated with a positive self-adjoint operator).

For $s\geq 0$, let us define $\|f\|_{H^s(S)}^2 := \|f\|_{\dot{H}^s(S)}^2+\|f\|_{L^2(S)}^2$ as usual. For $s \in \N $, the above definition of
$ \|f\|_{{H}^s(S)}^2$ is comparable to $\sum_{0\leq m\leq s} \| D^m f\|_{L^2(S)}^2$
if $\partial_{x_2}^n f|_{\partial S}=0$  for all even $n$ with $n<s$, where $D^m$ is any partial derivative of order $m\leq s$.

\emph{Local/global well-posedness results in the strip.}
Since the goal of \cite{CCL} was to establish stability results near the steady state $\rho_s(x) = -x_2$,   \eqref{ipm} was written into an equivalent equation \eqref{IPM_perturb} (with $g(x_2)=-x_2$) describing the evolution of $\eta = \rho-\rho_s$.
Here $u$ can be expressed in terms of $\eta$ similarly to \eqref{poisson}, except that the right hand side $\partial_{x_1} \rho$ is replaced by $\partial_{x_1}\eta$ (using that $\rho_s$ has zero contribution to $u$ since it is a steady state).

When the initial data $\eta(0)$ belongs to the functional space $X^k(S)$, given by
\[
X^k(S) := \{f\in H^k(\T^2): \partial_{x_2}^n f|_{\partial S}=0 \text{ for all even $n$ with $n<k$}\},
\]
(which in fact coincides with $H^k(S)$ defined above),
the authors proved local-wellposedness of \eqref{IPM_perturb} for $\eta(0) \in X^k(S)$ for any $k\geq 3$
\cite[Theorem 4.1]{CCL}, and gave a regularity criteria showing that $\eta(t)$ remains in $X^k(S)$ as long as $\int_0^t (\|\nabla\eta(s)\|_{L^\infty(S)} + \|\nabla u(s)\|_{L^\infty(S)}) ds < \infty$. As we discussed in the introduction, for $k\geq 10$, they proved the asymptotic stability of $\eta(t)$ (which implies global regularity) for $\eta(0) \in X^{k}(S)$ with $\|\eta(0)\|_{X^k(S)}\ll 1$.

\color{black}

\section{Infinite-in-time growth in the IPM}\label{growth}

In this section we aim to prove Theorems \ref{thm_whole_space} and \ref{thm_periodic}. Throughout this paper, the evolution of the potential energy
\[ E(t) := \int_{\Omega} x_2 \rho(x,t)\,dx. \]
plays a key role. Let us first prove a simple lemma showing that in each of the scenarios (S1)--(S3), $E(t)$ is monotone decreasing in time, and its time derivative is integrable in $t\in(0,\infty)$.

\begin{lemma}\label{energy_lemma}
Assume that $\Omega$ and $\rho_0$ satisfy one of the scenarios (S1)--(S3). Assuming that there is a global-in-time smooth solution $\rho(x,t)$ to \eqref{ipm} with initial data $\rho_0$, we have
\[
\frac{d}{dt} E(t) = -\underbrace{\|\partial_{x_1} \rho\|_{\dot{H}^{-1}(\Omega)}^2}_{=:\delta(t)}.
\]
In addition, we have $\int_0^\infty \delta(t) dt\leq C(\rho_0) < \infty$.
\end{lemma}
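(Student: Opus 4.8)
The plan is to compute $\frac{d}{dt}E(t)$ directly by differentiating under the integral sign, using the equation \eqref{ipm} and the Biot--Savart law to rewrite the result as a negative-definite quadratic form in $\partial_{x_1}\rho$. First I would write
\[
\frac{d}{dt}E(t) = \int_\Omega x_2 \,\partial_t\rho\,dx = -\int_\Omega x_2\,(u\cdot\nabla)\rho\,dx = \int_\Omega \rho\,(u\cdot\nabla)x_2\,dx = \int_\Omega \rho\,u_2\,dx,
\]
where the integration by parts uses $\nabla\cdot u = 0$ together with appropriate decay (in the $\R^2$ case $\rho_0$ is compactly supported and remains so, since $u$ is at least Lipschitz) or the no-flow boundary condition $u\cdot n = 0$ on $x_2 = \pm\pi$ (in the strip case) to kill boundary terms. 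Then I would substitute $u_2 = \partial_{x_1}(-\Delta_\Omega)^{-1}\partial_{x_1}\rho$, obtaining
\[
\frac{d}{dt}E(t) = \int_\Omega \rho\,\partial_{x_1}(-\Delta_\Omega)^{-1}\partial_{x_1}\rho\,dx = -\int_\Omega (\partial_{x_1}\rho)\,(-\Delta_\Omega)^{-1}(\partial_{x_1}\rho)\,dx = -\|\partial_{x_1}\rho\|_{\dot H^{-1}(\Omega)}^2 = -\delta(t),
\]
again integrating by parts in $x_1$ (no boundary terms, since $x_1$ is periodic in all three scenarios). This gives the first claim. One point requiring care: $E(t)$ must be finite and the manipulations legitimate — in the $\R^2$ case this follows from $\rho(\cdot,t) \in C_c^\infty$; in $\T^2$ and $S$ everything is over a bounded domain. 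Another subtlety in the torus case is that $\partial_{x_1}\rho$ has zero mean, so $(-\Delta)^{-1}$ is well-defined on it, matching the convention fixed in Section~\ref{prelim}.

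For the second claim, I would integrate the identity in time: for any $T>0$,
\[
\int_0^T \delta(t)\,dt = E(0) - E(T).
\]
So it suffices to bound $E(0) - E(T)$ uniformly in $T$, i.e. to show $E(t)$ is bounded below. In scenario (S1) we have $\rho_0 \geq 0$ on $\R\times\R^+$ and $\rho_0$ odd in $x_2$, a structure preserved by the flow (the odd-in-$x_2$ symmetry is conserved, and the flow preserves $\{x_2 > 0\}$ and the sign of $\rho$ there since $\rho$ is transported); hence $x_2\rho(x,t) \geq 0$ pointwise, so $E(t) \geq 0$ and $\int_0^\infty \delta\,dt \leq E(0) = C(\rho_0)$. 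In scenario (S2) with the additional hypotheses ($\rho_0$ even in $x_1$, vanishing on $x_1 = 0$, nonnegative on $D = [0,\pi]^2$), the same symmetries ($\rho$ odd in $x_2$, even in $x_1$, nonnegative on $D$ — the latter because the region $D$ has $\rho = 0$ on its vertical sides $x_1 = 0, \pi$ by symmetry, so it is flow-invariant) again force $x_2\rho \geq 0$ on $[0,\pi]^2$ and by symmetry on all of $\T^2$, giving $E(t) \geq 0$. For a general (S3) one only needs a lower bound, which follows from $|E(t)| \leq \pi\|\rho(\cdot,t)\|_{L^1(S)} = \pi\|\rho_0\|_{L^1(S)}$ using conservation of $L^1$; the same crude bound in fact works for (S2) as well. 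Either way $\int_0^\infty\delta(t)\,dt$ is bounded by a constant depending only on $\rho_0$.

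The main obstacle, such as it is, is not any single hard estimate but rather justifying the sign/boundary bookkeeping: verifying that the symmetry and positivity structure of each scenario is genuinely propagated by the smooth flow (so that $E(t)$ stays bounded below), and that the integrations by parts in the identity for $\frac{d}{dt}E(t)$ produce no boundary contributions — in particular checking that in the strip the $x_2$-boundary term $\int_{\T}[x_2\rho u_2]_{x_2=-\pi}^{x_2=\pi}dx_1$ vanishes thanks to $u\cdot n = 0$, and that in $\R^2$ the finite speed of propagation of the support (consequence of $u$ being Lipschitz, which we are assuming) keeps all integrals convergent. These are routine given the regularity assumptions stated in the introduction, so the lemma follows.
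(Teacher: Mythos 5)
Your proposal is correct and follows essentially the same route as the paper: differentiate $E(t)$, integrate by parts (with the boundary terms killed by compact support, periodicity, or $u\cdot n=0$), substitute the Biot--Savart law to get $E'=-\|\partial_{x_1}\rho\|_{\dot H^{-1}}^2$, and then bound $E(t)$ from below — by positivity of $x_2\rho$ in (S1) and by a crude bound in (S2)–(S3) (the paper uses the conserved $L^\infty$ norm where you use the conserved $L^1$ norm, an immaterial difference). The extra symmetry hypotheses you invoke for (S2) are not part of the lemma and are unnecessary, but your fallback $|E(t)|\leq\pi\|\rho_0\|_{L^1}$ already covers that case, so the argument stands as written.
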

\begin{proof}
A direct computation gives
\begin{equation} \label{eq_dEdt}
E'(t) = \frac{d}{dt} \int_{\Omega} x_2 \rho(x,t) \,dx = - \int_{\Omega} x_2 (u \cdot \nabla)\rho \,dx =  \int_{\Omega} u_2 \rho \,dx.
\end{equation}
Here the last inequality is due to the divergence theorem, where the boundary integral vanishes in all the three scenarios (S1)--(S3): In (S1), the boundary integral (at infinity) vanishes since $\rho(\cdot,t) \in C_c^\infty(\mathbb{R}^2)$ for all time. In (S2), the boundary integral    $ \int_{-\pi}^\pi x_2 u_1 \rho \,dx_2 \big|_{x_1=-\pi}^{x_1 =\pi}=0$ due to periodicity, and $\int_{-\pi}^{\pi} x_2 u_2 \rho \,dx_1 \big|_{x_2=-\pi}^{x_2 =\pi} =0$ since $\rho\equiv 0$ on $x_2=\pm \pi$ (which follows from the facts that $\rho(\cdot,t)$ is odd in $x_2$, and periodic in $\T^2$). In (S3), again $ \int_{-1}^1 x_2 u_1 \rho \,dx_2 \big|_{x_1=-\pi}^{x_1 =\pi}=0$ due to periodicity in $x_1$, whereas $\int_{-\pi}^{\pi} x_2 u_2 \rho \,dx_1 \big|_{x_2=-1}^{x_2 =1} =0$ due to $u\cdot n=0$ on $x_2=\pm \pi$.

By \eqref{eq_dEdt} and the Biot-Savart law $u=\nabla^\perp (-\Delta_{\Omega})^{-1}\partial_{x_1}\rho$, in (S1)--(S3), we get
\begin{equation}\label{Ecomp2}
E'(t) = \int_{\Omega} u_2 \rho \,dx = \int_{\Omega} \rho \partial_{x_1}(-\Delta_{\Omega})^{-1} \partial_{x_1}\rho \,dx = -\|\partial_{x_1} \rho\|_{\dot{H}^{-1}(\Omega)}^2,
\end{equation}
thus $E(t)$ is monotone decreasing. Note that in the strip case $\Omega=S$, the last identity follows from the definition of the $\dot{H}^{-1}$ norm in $S$ as in \eqref{def_sobolev_strip}.

Moreover, $E(t)$ is uniformly bounded below for all times. In (S1), the assumptions that $\rho_0$ is odd in $x_2$ and  $\rho_0\geq 0$ in $\R\times \R^+$ yield that $\rho(t)\geq 0$ in $\R\times \R^+$, thus $E(t)\geq 0$ for all times. In (S2) and (S3), since a smooth solution $\rho(x,t)$ of \eqref{ipm} has its $L^\infty$ norm invariant in time, we have that
\[ E(t) \geq -\|\rho_0\|_{L^\infty(\Omega)} 4\pi \int_{0}^{\pi} x_2 dx_2 = -2\pi^3 \|\rho_0\|_{L^\infty(\Omega)} \quad\text{ for all }t\geq 0. \]
Hence in all three cases (S1)--(S3), we have
\begin{equation}\label{intH}
\int_0^\infty\delta(t) \,dt = \int_0^\infty  \|\partial_{x_1} \rho(\cdot, t)\|_{\dot{H}^{-1}(\Omega)}^2 \,dt = E(0)-\lim_{t\to\infty}E(t) \leq C(\rho_0)\quad\text{ for all }t\geq 0,
\end{equation}
finishing the proof.
\end{proof}
\begin{remark}
When the equation is set in $\R^2$ with $\rho$ decaying sufficiently fast, monotonicity of $E(t)$ has been derived in \cite[Corollary 1.2]{ElgindiIPM}. For the Muskat equation (which can be seen as a ``patch'' solution of IPM) with surface tension, \cite{JKM} uses the gradient flow structure to construct weak solutions, where the energy functional is the potential energy plus the surface area of the free boundary.
\end{remark}
\color{black}

Now we are ready to prove Theorems~\ref{thm_whole_space} and \ref{thm_periodic}.
\begin{proof}[\textbf{\textup{Proof of Theorem~\ref{thm_whole_space}}}]
Due to Lemma~\ref{energy_lemma}, $\delta(t) := \|\partial_{x_1} \rho(t)\|_{\dot{H}^{-1}(\R^2)}^2$ satisfies $\int_0^\infty \delta(t) dt < C_0(\rho_0)  < \infty$.
Denoting $C_2 := \|\rho_0\|_{L^2(\R^2)}^2$, we have $\|\rho(t)\|_{L^2(\R^2)}^2=C_2$ for all $t\geq 0$, since the $L^p$ norm of $\rho$ is invariant in time for all $1\leq p\leq \infty$. Let us define
$
I := \left\{t\in (0,\infty): \delta(t) < \frac{1}{4}C_2 \right\}.$ Note that $\int_0^\infty \delta(t) dt \leq C_0(\rho_0)$ directly implies $|\mathbb{R}^+\setminus I| \leq 4C_0(\rho_0)/C_2$. We claim that for any $s>0$,
\begin{equation}\label{eq_goal}
\|\rho(t)\|_{\dot H^s(\R^2)} \geq C(s,\|\rho_0\|_{L^1(\R^2)}, C_2) \delta(t)^{-\frac{s}{4}} \quad\text{ for any }t\in I.
\end{equation}
Once we prove \eqref{eq_goal}, plugging it into $\int_0^\infty \delta(t) dt \leq C_0(\rho_0)$ and using the fact that $|\mathbb{R}^+\setminus I| \leq 4C_0(\rho_0)/C_2$, we have
\[
\int_0^\infty \|\rho(t)\|_{\dot{H}^s(\R^2)}^{-\frac{4}{s}} dt \leq C(s,\|\rho_0\|_{L^1}, C_2)^{-\frac{4}{s}} \int_I \delta(t) dt + \frac{4C_0(\rho_0)}{C_2} \left(\inf_{t\geq 0}\|\rho(t)\|_{\dot{H}^s(\R^2)}\right)^{-\frac{4}{s}} \leq C(s, \rho_0).
\]
Here in the last inequality we used that for any $s>0$, $\|\rho(t)\|_{\dot{H}^s(\R^2)}$ is bounded below by a positive constant $c(s,\rho_0)$, which follows from the elementary interpolation inequality $\|\rho(t)\|_{L^2(\R^2)} \leq \|\rho(t)\|_{\dot H^s(\R^2)}^{\frac{1}{1+s}} \|\rho\|_{L^1(\R^2)}^{\frac{s}{1+s}}$, as well as the fact that $\|\rho(t)\|_{L^2(\R^2)}$ and $\|\rho(t)\|_{L^1(\R^2)}$ are invariant in time. This finishes the proof of \eqref{whole_space_int}. Combining \eqref{whole_space_int} with the fact that  $\int_1^\infty t^{-1}dt=\infty$ gives \eqref{whole_space_ptwise} as a direct consequence.

In the rest we aim to prove \eqref{eq_goal} for any fixed $t\in I$, and we will drop the $t$ dependence in $\rho$ and $\delta$ below for notational simplicity.
 Defining
\[
D_\delta:= \left\{(\xi_1,\xi_2): \frac{|\xi_1|}{|\xi|} \geq \sqrt{\frac{2\delta}{C_2}}\right\}.
\]
we observe that
\[
 \delta = \|\partial_{x_1} \rho\|_{\dot{H}^{-1}(\R^2)}^2 = \int_{\mathbb{R}^2} \frac{\xi_1^2}{|\xi|^2} |\hat\rho(\xi)|^2 d\xi \geq \int_{D_\delta}  \frac{\xi_1^2}{|\xi|^2} |\hat\rho(\xi)|^2 d\xi \geq \frac{2\delta}{C_2} \int_{D_\delta} |\hat \rho|^2 d\xi.
\]
This gives $\int_{D_\delta} |\hat \rho|^2 d\xi \leq \frac{1}{2}C_2$, and combining it with $\|\hat \rho\|_{L^2(\R^2)}^2= \| \rho\|_{L^2(\R^2)}^2 = C_2 $ yields $\int_{D_\delta^c} |\hat \rho|^2 d\xi \geq \frac{1}{2}C_2$. Note that $D_\delta^c$ consists of two symmetric cones containing the $\xi_2$ axis, and it can be expressed in polar coordinates as $D_\delta^c = \{(r \cos\theta,r\sin\theta): r\geq 0, |\cos\theta| < \sqrt{2\delta/C_2}$\}.

Clearly, $\|\hat\rho\|_{L^\infty(\R^2)} \leq (2\pi)^{-1}\|\rho_0\|_{L^1(\R^2)} =: C_1$. Let $h_\delta>0$ be such that $|D_\delta^c \cap \{|\xi_2|<h_\delta\}| = (4C_1^2)^{-1} C_2$, which will be estimated momentarily. Such definition gives
\[
\int_{D_\delta^c \cap \{|\xi_2|\geq h_\delta\}} |\hat\rho|^2 d\xi = \int_{D_\delta^c}  |\hat\rho|^2 d\xi - \int_{D_\delta^c \cap \{|\xi_2|<h_\delta\}}  |\hat\rho|^2  d\xi \geq \frac{1}{2}C_2  - (4C_1^2)^{-1} C_2 C_1^2 = \frac{1}{4}C_2,
\]
immediately leading to
\begin{equation}\label{Hs}
\|\rho\|_{\dot{H}^s(\R^2)}^2 \geq \int_{\mathbb{R}^2} |\xi_2 |^{2s} |\hat\rho|^2 d\xi \geq h_\delta^{2s} \int_{D_\delta^c \cap \{|\xi_2|\geq h_\delta\}} |\hat\rho|^2 d\xi \geq \frac{C_2}{4} h_\delta^{2s}.
\end{equation}
It remains to estimate $h_\delta$. Denoting $\theta_0 := \cos^{-1}(\sqrt{\frac{2\delta}{C_2}})$, we know $D_\delta^c \cap \{|\xi_2|<h_\delta\}$ consists of two identical triangles with height $h_\delta$ and base $2h_\delta \cot\theta_0$. Thus
\[
 (4C_1^2)^{-1} C_2 = |D_\delta^c \cap \{|\xi_2|<h_\delta\}| = 2h_\delta^2 \cot\theta_0 \leq 4\sqrt{\delta} C_2^{-1/2}h_\delta^2,
\]
where in the last inequality we used  $\cos\theta_0 = \sqrt{\frac{2\delta}{C_2}}$ and $\sin\theta_0 = \sqrt{1-\frac{2\delta}{C_2}} \geq 1/\sqrt{2}$, due to $t\in I$.
Therefore $h_\delta \geq (4C_1)^{-1} C_2^{3/4} \delta^{-1/4}$. Plugging it into \eqref{Hs} yields \eqref{eq_goal}, finishing the proof.
\end{proof}

\begin{proof}[\textbf{\textup{Proof of Theorem~\ref{thm_periodic}}}]
Since $\rho_0$ is even in $x_1$ and odd in $x_2$, due to the Biot-Savart law $u = \nabla^\perp(-\Delta)^{-1} \partial_{x_1}\rho$,  the even-odd symmetry of $\rho$ remains true for all times. In particular, it implies that  on the boundary of the smaller square $D:= [0,\pi]^2$, we have $u(\cdot,t)\cdot n|_{\partial D}=0$, and combining it with $\rho_0\geq 0$ on $D$ gives $\rho(t)\geq 0$ on $D$ for all times. In addition, note that $\rho_0=0$ on $x_1=0$ and the fact that $u(\cdot,t)\cdot n|_{\partial D}=0$  imply $\rho(0,x_2,t)\equiv 0$ for all $x_2$ and $t$.

For any $t\geq 0$, $\rho(\cdot,t):\mathbb{T}^2\to\mathbb{R}$ has Fourier series \eqref{fourier_periodic_1}--\eqref{fourier_periodic_2} (with $f$ replaced by $\rho(\cdot,t)$), and the Fourier coefficient $\hat\rho(k,t)$ for $k=(k_1,k_2)\in \Z^2$ can be written as
\begin{equation}\label{rho_g}
\begin{split}
\hat\rho(k,t)
 &= \frac{1}{(2\pi)^2} \int_{\T} e^{-ik_1 x_1} \int_{\T} e^{-ik_2 x_2} \rho(x_1, x_2, t) dx_2 dx_1\\
 &= \frac{1}{(2\pi)^2}  \int_{\T} e^{-ik_1 x_1} (-2i) \underbrace{\int_{0}^\pi \sin(k_2 x_2) \rho(x_1, x_2, t) dx_2}_{=:g(x_1,k_2,t)} dx_1,
\end{split}
\end{equation}
where the last identity follows from the oddness of $\rho(\cdot, t)$ in $x_2$.

Let us take a closer look at the function $g(x_1,1,t)$ in the last line of \eqref{rho_g} (where we set $k_2=1$). It satisfies the following properties for all $t\geq 0$:
\begin{enumerate}
\item[(a)] $g(x_1,1,t)\geq 0$ for all $x_1\in\mathbb{T}$ and $t\geq 0$, and is even in $x_1$.
\item[(b)] $g(0,1,t)=0$ for all $t\geq 0$.
\item[(c)] $\int_{\T} g(x_1, 1, t) dx_1 \geq c(\rho_0)$ for all $t\geq 0$, where $c(\rho_0)>0$ only depends on $\rho_0$.
\end{enumerate}
Here properties (a, b) follow from the facts that $\rho(t)$ is even in $x_1$, nonnegative on $D= [0,\pi]^2$, and $\rho(0,x_2,t)\equiv 0$ for all times. For property (c), note that
$
\int_{\T} g(x_1,1,t) dx_1 = 2 \int_0^\pi g(x_1,1,t) dx_1 = 2\int_D \sin(x_2) \rho(x,t) dx.
$
H\"older's inequality and the fact that $\sin(x_2)\rho(x,t)\geq 0$ in $D$ yield that
\[
\int_D \sin(x_2) \rho(x,t) dx \geq  \left(\int_D \sin(x_2)^{-1/2} dx\right)^{-2} \left(\int_D \rho(x,t)^{1/3} dx\right)^3 \geq c \left(\int_D \rho(x,t)^{1/3} dx\right)^3,
\]
where $c>0$ is a universal constant. Since $\rho$ is advected by a divergence-free flow $u$ with $u\cdot n|_{\partial D}=0$, one can easily check that $\int_D \rho(x,t)^{1/3} dx = \int_D \rho_0^{1/3} dx>c_1$ for some $c_1(\rho_0)>0$, finishing the proof of property (c).

Let us  define $\hat g(k_1,t)$ as the Fourier coefficient of $g(\cdot, 1,t)$, given by
\begin{equation}\label{rho_g_2}
\hat g(k_1,t) :=\frac{1}{2\pi} \int_{\T} e^{-ik_1 x_1} g(x_1,1,t) dx \quad\text{ for any }k_1\in\mathbb{Z}.
\end{equation}
Comparing \eqref{rho_g_2} with \eqref{rho_g} directly yields
\begin{equation}
\label{eq_k_rho}
 \hat\rho(k_1,1,t)=\frac{-2i}{2\pi}\hat g(k_1,t)\quad\text{ for any }k_1\in \Z.
 \end{equation} Using the functions $g$ and $\hat g$, we can estimate $\delta(t) = \|\rho(t)\|_{\dot{H}^{-1}(\T^2)}^2$ from below as
\begin{equation}
\begin{split}
\delta(t) & = (2\pi)^2 \sum_{k\in \Z^2\setminus\{(0,0)\}}\frac{k_1^2}{|k|^2}|\hat\rho(k_1,k_2,t)|^2  \geq (2\pi)^2\sum_{k_1\in \Z\setminus\{0\}} \frac{k_1^2}{k_1^2+1}|\hat\rho(k_1,1,t)|^2\\
&\geq 2\sum_{k_1\in \Z\setminus\{0\}} |\hat g(k_1,t)|^2 = \frac{1}{\pi} \int_{\T} |g(x_1,1,t)-\bar g(t)|^2 dx_1
\end{split}
\end{equation}
where
$
\bar g(t) := \frac{1}{2\pi}\int_{\T} g(x_1,1,t) dx_1
$ is the average of $g(\cdot,1,t)$ in $\T$.
By property (c), we have $\bar g(t)\geq \frac{c(\rho_0)}{2\pi}>0$ for all $t\geq 0$. Intuitively, if $\delta(t)$ is small, $g(\cdot,1,t)$ must be very close to $\bar g(t)$ in $L^2$. With $g(0,1,t)$ pinned down at zero (by property (b)), and $\bar g$ being uniformly positive, $g(\cdot,1,t)$ must have order 1 oscillations in a small neighborhood near $0$, suggesting it should have a large $\dot{H}^\alpha$ norm for $\alpha>\frac{1}{2}$. This estimate will be made rigorous in Lemma~\ref{lem:1d} right after the proof. Applying Lemma~\ref{lem:1d} to $g(x_1,1,t)$, we have
\begin{equation}\label{ineq_g}
\|g(\cdot,1,t)\|_{\dot{H}^{\alpha}(\T)} \geq c(\alpha, \rho_0) \delta(t)^{-\alpha+\frac{1}{2}} \quad\text{ for all }\alpha>\frac{1}{2}.
\end{equation}
Note that
\[
\|g(\cdot,1,t)\|_{\dot{H}^\alpha(\T)}^2 = 2\pi \sum_{k_1\neq 0} |k_1|^{2\alpha}|\hat g(k_1,t)|^2 = 2\pi^3 \sum_{k_1\neq 0} |k_1|^{2\alpha}|\hat\rho(k_1,1,t)|^2 \leq\frac{\pi}{\sqrt{2}}  \|\partial_{x_1}\rho\|_{\dot{H}^{\alpha-1}(\T^2)}^2,
\]
where the last inequality follows by just looking at the $k_2=1$ part of the sum for the last norm taken on Fourier side and using $\alpha>1/2.$
Setting $s:=\alpha-1$ and applying \eqref{ineq_g}, we have
\[
\|\partial_{x_1}\rho\|_{\dot{H}^{s}} \geq \Big(\frac{\sqrt{2}}{\pi}\Big)^{1/2} \|g(\cdot,1,t)\|_{\dot{H}^{s+1}(\T)} \geq c(s,\rho_0) \delta^{-s-\frac{1}{2}} \quad\text{ for } s> -\frac{1}{2}.
\]
Plugging this inequality into $\int_0^\infty\delta(t)dt\leq C_0(\rho_0)<\infty$ implies \eqref{periodic_int}, and combining \eqref{periodic_int} with the fact that  $\int_1^\infty t^{-1}dt=\infty$ gives \eqref{periodic_ptwise} as a direct consequence.
\end{proof}

Now we state and prove the lemma used in the proof of Theorem~\ref{thm_periodic}.
\begin{lemma}\label{lem:1d}
If $f:\mathbb{T}\to\mathbb{R}$ satisfies $f(0)=0$, $\int_{\T} f(x) dx \geq c_0>0$ and $\int_{\T} |f-\bar f|^2 dx < \delta$ (where $\bar f := \frac{1}{2\pi}\int_{\T} f(x)dx$), then
\begin{equation}\label{eq_lemma1}
\|f\|_{\dot{H}^\alpha(\T)} \geq c(\alpha, c_0) \delta^{-\alpha+\frac{1}{2}} \quad\text{ for all } \alpha>\frac{1}{2}.
\end{equation}
\end{lemma}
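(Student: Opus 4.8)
The plan is to exploit the tension between three facts: $f(0)=0$, the mean $\bar f$ is bounded below, and $f$ is close to its mean in $L^2$. Intuitively, $f$ must climb from $0$ up to roughly $\bar f$ over a short interval, and a function with a steep transition has a large high-order Sobolev norm. I would make this precise by working on the Fourier side, or alternatively — and more cleanly — by a direct interpolation/quantitative argument in physical space.

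\textbf{Physical-space approach.} Let $L\in(0,\pi]$ be a length scale to be chosen. Since $\int_{\T}|f-\bar f|^2\,dx<\delta$, Chebyshev's inequality shows that the set where $|f-\bar f|\geq \bar f/2$ has measure at most $4\delta/\bar f^2$. Choosing $L$ comparable to $\bar f^2/\delta$ (but capped at $\pi$; note $\bar f = \tfrac{1}{2\pi}\int f \geq c_0/(2\pi)$, so $\delta$ small forces $L$ genuinely small), there is a point $x_*\in[0,L]$ with $f(x_*)\geq \bar f/2$. On the other hand $f(0)=0$. So on the interval $[0,x_*]\subset[0,L]$ the function $f$ increases by at least $\bar f/2\geq c_0/(4\pi)$. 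Now I invoke the Gagliardo–Nirenberg / Sobolev embedding interpolation in one dimension: for $\alpha>1/2$ one has the Hölder-type bound
\[
|f(x)-f(y)|\leq C_\alpha\,|x-y|^{\alpha-\frac12}\,\|f\|_{\dot H^\alpha(\T)}
\]
(valid for mean-zero $f$, and after subtracting the mean it holds in general since the difference $f(x)-f(y)$ is unaffected). Applying this with $x=0$, $y=x_*$ gives
\[
\frac{c_0}{4\pi}\leq |f(x_*)-f(0)|\leq C_\alpha\,|x_*|^{\alpha-\frac12}\,\|f\|_{\dot H^\alpha(\T)}\leq C_\alpha\,L^{\alpha-\frac12}\,\|f\|_{\dot H^\alpha(\T)}.
\]
Since $L\lesssim \delta/\bar f^2 \lesssim \delta/c_0^2$, rearranging yields $\|f\|_{\dot H^\alpha(\T)}\geq c(\alpha,c_0)\,\delta^{-(\alpha-\frac12)}$, which is exactly \eqref{eq_lemma1}. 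One must double-check the case $\delta$ not small, where $L$ is capped at $\pi$: then $\delta$ is bounded above by a constant depending on $c_0$, and the claimed inequality $\|f\|_{\dot H^\alpha}\gtrsim \delta^{-(\alpha-1/2)}$ is easy because the right side is bounded above while $\|f\|_{\dot H^\alpha}$ is bounded below by a positive constant (e.g. from $\|f-\bar f\|_{L^2}$ being bounded below, which follows from $f(0)=0$ and $\bar f\geq c_0/2\pi$ forcing $f$ nonconstant in a quantitative way — or simply absorb this regime into the constant).

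\textbf{The main obstacle} is ensuring the Hölder embedding constant $C_\alpha$ is uniform as $\alpha\downarrow 1/2$ — it blows up there, so the constant $c(\alpha,c_0)$ in \eqref{eq_lemma1} legitimately degenerates as $\alpha\to 1/2^+$, which is consistent with the statement allowing $c$ to depend on $\alpha$. A second small point: the Sobolev–Hölder inequality on $\T$ is standard but one should state it for the homogeneous norm; writing $f-\bar f = \sum_{k\neq 0}\hat f(k)e^{ikx}$ and estimating $|f(x)-f(y)|\leq \sum_{k\neq0}|\hat f(k)||e^{ikx}-e^{iky}|\leq \sum_{k\neq 0}|\hat f(k)|\min(2,|k||x-y|)$, then splitting the sum at $|k|\sim|x-y|^{-1}$ and applying Cauchy–Schwarz against $\sum |k|^{2\alpha}|\hat f(k)|^2$, gives the bound with $C_\alpha^2\lesssim \sum_{|k|\geq1}|k|^{-2\alpha}\cdot(\ldots)$ finite precisely when $\alpha>1/2$. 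This is routine and I would not belabor it.

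Alternatively, a pure Fourier proof works: from $f(0)=0$ we get $\sum_{k\neq0}\hat f(k)=-\bar f$, so $|\bar f|\leq \sum_{k\neq 0}|\hat f(k)|\leq \big(\sum_{|k|\leq N}1\big)^{1/2}\|f-\bar f\|_{L^2}^{1/2}(2\pi)^{-1/2}+\big(\sum_{|k|>N}|k|^{-2\alpha}\big)^{1/2}\|f\|_{\dot H^\alpha}(2\pi)^{-1/2}$, i.e. $|\bar f|\lesssim N^{1/2}\delta^{1/2}+N^{\frac12-\alpha}\|f\|_{\dot H^\alpha}$; optimizing in $N$ (balance the two terms, $N\sim (\|f\|_{\dot H^\alpha}/\delta^{1/2})^{1/\alpha}$) and using $|\bar f|\geq c_0/2\pi$ gives the same conclusion. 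I would present whichever is shorter; the Fourier version is essentially self-contained and avoids citing an embedding, so I lean toward that.
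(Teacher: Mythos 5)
Your Fourier argument at the end is correct, and it is the version worth keeping: assuming without loss of generality that $\|f\|_{\dot{H}^\alpha}<\infty$ (which for $\alpha>\frac12$ makes the Fourier series absolutely convergent, so the pointwise identity $\sum_{k\neq 0}\hat f(k)=-\bar f$ is legitimate), the split at $N\sim c_0^2/\delta$ with Cauchy--Schwarz on each block gives the claim for every $\alpha>\frac12$, and the regime $c_0^2/\delta\lesssim 1$ is absorbed via $c_0\lesssim|\bar f|\leq\sum_{k\neq0}|\hat f(k)|\leq C_\alpha\|f\|_{\dot{H}^\alpha}$. This is a genuinely different route from the paper, which works in physical space: setting $h=f-\bar f$, it finds (Chebyshev) a point $x_0\in(0,4\delta/c_0^2)$ with $h(x_0)>-c_0/2$, deduces a lower bound on the $C^\gamma$ seminorm of $h$, invokes the Sobolev embedding $\dot{H}^{\gamma+\frac12}(\T)\hookrightarrow C^\gamma(\T)$ for $0<\gamma\leq1$, and then treats $\alpha>\frac32$ separately by interpolating $\|h\|_{\dot{H}^1}\leq\|h\|_{\dot{H}^\alpha}^{1/\alpha}\|h\|_{L^2}^{(\alpha-1)/\alpha}$. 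Your Fourier proof avoids both the embedding and this extra interpolation step and handles all $\alpha>\frac12$ uniformly, at the cost of being less geometric.

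Your physical-space version, however, has a gap exactly where the paper inserts that extra step: the asserted bound $|f(x)-f(y)|\leq C_\alpha|x-y|^{\alpha-\frac12}\|f\|_{\dot{H}^\alpha}$ is false for $\alpha>\frac32$ (a H\"older exponent above $1$ would force constancy; concretely $f(x)=\sin x$ has $|f(x)-f(0)|\sim|x|\gg|x|^{\alpha-\frac12}$ for small $x$), and your own frequency-splitting sketch only produces the exponent $\alpha-\frac12$ when $\alpha<\frac32$ --- for $\alpha\geq\frac32$ the low-frequency block gives at best a Lipschitz (or log-corrected) bound. So as written that route proves the lemma only for $\alpha\in(\frac12,\frac32)$, and needs either the paper's interpolation patch or replacement by your Fourier argument. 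Two smaller slips: $L$ should be comparable to $\delta/\bar f^2$, not $\bar f^2/\delta$ (your later inequality $L\lesssim\delta/c_0^2$ is the intended one); and in the large-$\delta$ regime $\|f-\bar f\|_{L^2}$ is \emph{not} bounded below (take $f\equiv\bar f$ except for a narrow dip to $0$ near the origin), so the correct fallback is again $c_0\lesssim|\bar f|=|f(0)-\bar f|\leq\|f-\bar f\|_{L^\infty}\leq C_\alpha\|f\|_{\dot{H}^\alpha}$, i.e.\ the $\alpha>\frac12$ embedding or Fourier bound, not an $L^2$ lower bound.
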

\begin{proof}
Note that $h(x):=f(x)-\bar f$ has mean zero in $\T$, and $h(0) = -\bar f \leq -c_0$. By the assumption $\int_{\T} h^2 dx < \delta$, there exists some $x_0 \in (0,\frac{4\delta}{c_0^2})$ such that $h(x_0)> -\frac{c_0}{2}$. This implies
\[
\|h\|_{C^\gamma(\T)} \geq \frac{|h(x_0)-h(0)|}{|x_0-0|^\gamma} \geq (c_0/2)^{1+2\gamma}\delta^{-\gamma} \quad\text{ for all }0<\gamma\leq 1.
\]
Applying the Sobolev embedding theorem, we have
\[
\|h\|_{\dot H^{\gamma+\frac{1}{2}}(\T)} \geq c(\gamma) \|h\|_{C^\gamma(\T)} \geq c(\gamma, c_0) \delta^{-\gamma} \quad\text{ for all }0<\gamma\leq 1,
\]
and setting $\alpha=\gamma+\frac{1}{2}$ gives \eqref{eq_lemma1} for $\alpha \in (\frac{1}{2}, \frac{3}{2}]$, where we also use that $\|h\|_{\dot H^{\alpha}(\T)}=\|f\|_{\dot H^{\alpha}(\T)}$. For $\alpha>\frac{3}{2}$, we can apply the interpolation inequality $\|h\|_{\dot{H}^1(\T)} \leq \|h\|_{\dot{H}^\alpha(\T)}^{\frac{1}{\alpha}} \|h\|_{L^2(\T)}^{\frac{\alpha-1}{\alpha}}$ to obtain
\[
\|h\|_{\dot{H}^\alpha(\T)} \geq \|h\|_{\dot{H}^1(\T)}^{\alpha} \|h\|_{L^2(\T)}^{-(\alpha-1)} \geq c(\alpha,c_0) \delta^{-\alpha+\frac{1}{2}},
\]
thus we can conclude.
\end{proof}

\section{Bubble and layer solutions}
Previously, we have proved infinite time growth results in Theorem~\ref{thm_periodic} and Remark~\ref{rmk_strip} for scenarios (S2) and (S3), under some additional assumption on $\rho_0$. In this section we aim to work with less restrictive initial data -- in particular, the assumption that $\rho_0=0$ for $x_1=0$ can now be dropped. This will enable us to obtain instability results in Section~\ref{sec_instability} for initial data close to stratified steady states. However, as the proof is done by a different approach, the set of Sobolev exponents with norm growth (as well as the growth rate in time) is not as good as Theorem~\ref{thm_periodic} and Remark~\ref{rmk_strip}.

We first consider the initial data $\rho_0\in C^\infty(\T^2)$ of ``bubble'' type, that is, its level sets  have a connected component  $\Gamma_0$ enclosing a simply-connected region, and $|\nabla\rho_0|$ does not vanish on $\Gamma_0$ (see Figure~\ref{fig_bubble} for an illustration). Intuitively, since the topological structure of all level sets is preserved under the evolution, the presence of the ``bubble" prevents the solution $\rho(x,t)$ from aligning into a perfectly stratified form where $\partial_{x_1} \rho$ may increasingly vanish. In the next result we rigorously justify this by showing that $\|\partial_{x_1}\rho(t)\|_{L^1(\T^2)} > c >0$ for all times, and as we will see, this leads to infinite-in-time growth in certain Sobolev norms.

\begin{proposition}\label{prop_bubble}
Let $\Omega=\mathbb{T}^2$, and assume $\rho_0$ satisfies the scenario (S2). Suppose there exists a simple closed curve $\Gamma_0 \subset \T^2$ enclosing a simply-connected domain $D_0\subset \T^2$, and $\rho_0$ satisfies $\rho_0|_{\Gamma_0}=\text{const}$ and $\inf_{\Gamma_0} |\nabla\rho_0|>0$\footnote{Observe that by Sard's theorem \cite{Sard}, since $\rho_0 \in C^2(\T^2)$, the set of $h$ such that $\{\rho_0(x)=h\}$ contains a critical point has Lebesgue measure zero.
}. Assuming that there is a global-in-time smooth solution $\rho(x,t)$ to \eqref{ipm} with initial data $\rho_0$, we have
\begin{equation}
\label{periodic_int_bubbles}
\int_0^\infty \|\partial_{x_1} \rho(\cdot, t)\|_{\dot{H}^s(\T^2)}^{-\frac{2}{s}} dt \leq  C(s,\rho_0)\quad\text{ for all }s>0.
\end{equation}
which implies
\begin{equation}
\label{periodic_ptwise_bubbles}
 \limsup_{t\to\infty} t^{-\frac{s}{2}} \|\partial_{x_1}\rho(t)\|_{\dot{H}^s(\T^2)}=\infty \quad\text{ for all }s>0.
\end{equation}
\end{proposition}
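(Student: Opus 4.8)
The plan is to combine the dissipation estimate of Lemma~\ref{energy_lemma} with a uniform-in-time lower bound for $\|\partial_{x_1}\rho(\cdot,t)\|_{L^2(\T^2)}$ that is forced by the topological persistence of the bubble. The heart of the matter is the claim that there is a constant $c(\rho_0)>0$ with
\[
\|\partial_{x_1}\rho(\cdot,t)\|_{L^1(\T^2)}\ge c(\rho_0)\qquad\text{for all }t\ge 0 .
\]
Granting this, since $\T^2$ has finite measure we get $\|\partial_{x_1}\rho(\cdot,t)\|_{L^2(\T^2)}\ge (2\pi)^{-1}\|\partial_{x_1}\rho(\cdot,t)\|_{L^1(\T^2)}\ge c'(\rho_0)$, and, because $\widehat{\partial_{x_1}\rho}$ vanishes at $k=(0,0)$, the elementary interpolation inequality for the Fourier coefficients gives, for every $s>0$,
\[
\|\partial_{x_1}\rho(\cdot,t)\|_{L^2(\T^2)}\le \|\partial_{x_1}\rho(\cdot,t)\|_{\dot{H}^{-1}(\T^2)}^{\frac{s}{s+1}}\,\|\partial_{x_1}\rho(\cdot,t)\|_{\dot{H}^{s}(\T^2)}^{\frac{1}{s+1}}=\delta(t)^{\frac{s}{2(s+1)}}\,\|\partial_{x_1}\rho(\cdot,t)\|_{\dot{H}^{s}(\T^2)}^{\frac{1}{s+1}},
\]
hence $\|\partial_{x_1}\rho(\cdot,t)\|_{\dot{H}^{s}(\T^2)}\ge c'(\rho_0)^{s+1}\,\delta(t)^{-s/2}$. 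Feeding this into $\int_0^\infty\delta(t)\,dt\le C(\rho_0)<\infty$ from Lemma~\ref{energy_lemma} gives \eqref{periodic_int_bubbles}, and \eqref{periodic_ptwise_bubbles} then follows exactly as in the proofs of Theorems~\ref{thm_whole_space} and \ref{thm_periodic}, using $\int_1^\infty t^{-1}\,dt=\infty$.

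To prove the $L^1$ bound I would track the bubble along the Lagrangian flow. Let $\Phi_t$ be the flow map of $u$; as long as $\rho$ (hence $u$) remains smooth, $\Phi_t$ is an area-preserving homeomorphism of $\T^2$ isotopic to the identity, so $\Gamma_t:=\Phi_t(\Gamma_0)$ is again a contractible simple closed curve bounding the simply-connected domain $D_t:=\Phi_t(D_0)$, with $|D_t|=|D_0|$, $\rho(\Phi_t(x),t)=\rho_0(x)$, and $\rho(\cdot,t)\equiv h$ on $\Gamma_t$, where $h:=\rho_0|_{\Gamma_0}$. Because $\rho_0$ is constant on $\Gamma_0$ with $|\nabla\rho_0|$ bounded below there, $\Gamma_0$ is a smooth regular level curve of $\rho_0$ and $\nabla\rho_0$ is a nonvanishing normal field along it, hence points consistently to one side; assume it points into $D_0$ (the opposite case is symmetric, working with $\{\rho_0<h\}$). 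Then there is a one-sided collar $\Psi(\Gamma_0\times(0,\epsilon'])\subset D_0$ on which $\rho_0>h$, and restricting to a compact sub-collar $K_0:=\Psi(\Gamma_0\times[\epsilon'/3,2\epsilon'/3])$ we have $\rho_0\ge h+\eta_0$ on $K_0$ for some $\eta_0>0$, with $|K_0|>0$ and $K_0\subset D_0$. Set $K_t:=\Phi_t(K_0)$; then $K_t\subset D_t$ is compact, $|K_t|=|K_0|$, and $\rho(\cdot,t)\ge h+\eta_0$ on $K_t$.

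The bound then follows from slicing in $x_1$. Writing $\pi_2$ for the projection onto the $x_2$-axis, $K_t\subset \T\times\pi_2(K_t)$ forces $|\pi_2(K_t)|\ge |K_t|/(2\pi)=|K_0|/(2\pi)>0$. For each $x_2\in\pi_2(K_t)$, the horizontal circle $H_{x_2}:=\T\times\{x_2\}$ meets $K_t\subset D_t$; it cannot be contained in $D_t$, since a noncontractible loop of $\T^2$ cannot lie inside the simply-connected set $D_t$, so the connected set $H_{x_2}$ must meet $\partial D_t=\Gamma_t$. Consequently $\rho(\cdot,t)$ takes a value $\ge h+\eta_0$ (on $K_t$) and the value $h$ (on $\Gamma_t$) along $H_{x_2}$, whence
\[
\int_{\T}|\partial_{x_1}\rho(x_1,x_2,t)|\,dx_1\ \ge\ \mathrm{osc}_{x_1}\,\rho(\cdot,x_2,t)\ \ge\ \eta_0 .
\]
Integrating over $x_2\in\pi_2(K_t)$ yields $\|\partial_{x_1}\rho(\cdot,t)\|_{L^1(\T^2)}\ge \eta_0|\pi_2(K_t)|\ge \eta_0|K_0|/(2\pi)=:c(\rho_0)>0$.

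I expect the main obstacle to be precisely this $L^1$ lower bound, and within it the two quantitative points: (i) selecting a fixed-measure piece of a super-level set that genuinely lies inside $D_0$, so that every horizontal line meeting it is forced to cross $\Gamma_t$ (where $\rho=h$); and (ii) ensuring that neither this piece nor its $x_2$-projection can degenerate as $t\to\infty$ — this is where incompressibility, i.e. area preservation of $\Phi_t$, enters. The remaining steps (the Fourier interpolation and the passage from the time-integral bound to the $\limsup$ statement) are routine and mirror the proofs of Theorems~\ref{thm_whole_space}--\ref{thm_periodic}.
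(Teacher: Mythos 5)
Your proposal is correct and follows essentially the same route as the paper: a uniform-in-time lower bound on $\|\partial_{x_1}\rho(\cdot,t)\|_{L^1(\T^2)}$ obtained by transporting the bubble with the area-preserving flow map, noting that any horizontal circle meeting the transported interior set must cross $\Phi_t(\Gamma_0)$ (a noncontractible loop cannot lie in a simply-connected set), then Cauchy--Schwarz, the $L^2$--$\dot H^{-1}$--$\dot H^s$ interpolation, and Lemma~\ref{energy_lemma}. The only cosmetic difference is that you build a positive-measure one-sided collar $K_0$ with $\rho_0\geq h+\eta_0$, whereas the paper selects a nearby level curve $\Gamma_1\subset D_0$ with a different constant value and uses the area of its enclosed region $D_1$ to bound the $x_2$-projection; both serve the same purpose.
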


\begin{proof}
Since $\rho_0\in C^\infty(\T^2)$ with $\inf_{\Gamma_0} |\nabla\rho_0|>0$,  $|\nabla \rho_0|$ is uniformly positive in some open neighborhood of $\Gamma_0$. Combining this with $\rho_0|_{\Gamma_0}=\text{const}=:c_0$, for any $c\in\R$ sufficiently close to $c_0$, the level set $\{\rho_0=c\}$ has a connected component that is a simple closed curve near $\Gamma_0$. Since $\Gamma_0$ encloses a simply-connected region $D_0$, there exists a simple closed curve $\Gamma_1 \subset D_0$ such that $\rho_0|_{\Gamma_1}=c_1\neq c_0$. Denote by $D_1$ the region enclosed by $\Gamma_1$, which is also simply-connected. See Figure~\ref{fig_bubble} for an illustration of the curves $\Gamma_0, \Gamma_1$ and the domains $D_0, D_1$.

\begin{figure}
\begin{center}
\includegraphics[scale=1.2]{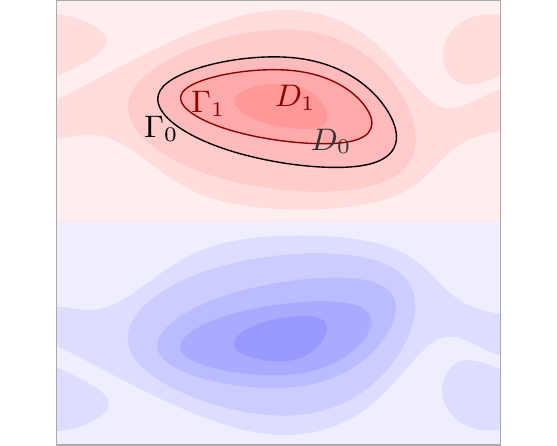}
\caption{An illustration of the curves $\Gamma_0$, $\Gamma_1$ and the domains $D_0$, $D_1$ in the ``bubble solution''.\label{fig_bubble}}
\end{center}
\end{figure}

Let us as usual define the trajectories of the flow by
\begin{equation}\label{trajectory}
 \frac{d\Phi_t(x)}{dt} = u(\Phi_t(x),t), \,\,\,\Phi_0(x)=x.
\end{equation}
While the solution remains smooth, the flow map $\Phi_t:\T^2\to\T^2$ is a measure-preserving smooth mapping.
Thus $\Phi_t (D_0)$ and $\Phi_t (D_1)$ remain simply-connected in $\T^2$, and they satisfy $\Phi_t (D_1) \subset \Phi_t (D_0)$ for all $t\geq 0$.  Denoting by $m(\cdot)$ the  Lebesgue measure of a set (which is preserved by $\Phi_t$), we have $m(\Phi_t (D_1))=m(D_1)$ for all $t\geq 0$. In addition, since $\rho$ is advected by $u$ \eqref{ipm}, we have as usual that $\rho(\Phi_t(x),t) = \rho_0(x)$ for all $x$ and $t$, thus $\rho(x,t)|_{x\in \Phi_t(\Gamma_0)}=c_0$ and $\rho(x,t)|_{x\in \Phi_t(\Gamma_1)}=c_1$ for all $t\geq 0$.

Let us denote $\Pi_2:\T^2\to \T$  the projection map onto the $x_2$ variable, i.e. for any $S\subset \T^2$, $\Pi_2(S) := \{x_2\in \T: (x_1,x_2)\in S \text{ for some }x_1\in\T\}$. Using $\Phi_t (D_1) \subset \Phi_t (D_0)$, we have
\[
\Pi_2(\Phi_t (D_1)) \subset \Pi_2(\Phi_t (D_0)) \quad\text{ for all }t\geq 0.
\]
Since $\Phi_t (D_0)$ and $\Phi_t (D_1)$ are simply-connected domains enclosed by boundaries
 $\Phi_t (\Gamma_0)$ and $\Phi_t (\Gamma_1)$ respectively, the above becomes
 \begin{equation}\label{eq_sets}
 \Pi_2(\Phi_t (\Gamma_1)) = \Pi_2(\Phi_t (D_1)) \subset \Pi_2(\Phi_t (D_0)) = \Pi_2(\Phi_t (\Gamma_0)) \quad\text{ for all }t\geq 0.
\end{equation}
Using
$m(\Phi_t (D_1))=m(D_1)$, we have $\Pi_2(\Phi_t (D_1)) \geq \frac{m(D_1)}{2\pi}$ for all $t\geq 0$.
Finally, defining $I(t) :=  \Pi_2(\Phi_t (\Gamma_1))$, which is a subset in $\T$, we have shown that
$|I(t)|\geq \frac{m(D_1)}{2\pi}$ and $I(t) \subset \Pi_2(\Phi_t (\Gamma_0))$ for all $t\geq 0$.

By definition of $I(t)$ and \eqref{eq_sets}, for any $t\geq 0$ and $x_2\in I(t)$, $\T\times x_2$ has a non-empty intersection with both $\Phi_t(\Gamma_1)$ and $\Phi_t(\Gamma_0)$. Since $\rho(\cdot,t)|_{\Phi_t(\Gamma_0)}=c_0$ and $\rho(\cdot,t)|_{\Phi_t(\Gamma_1)}=c_1$, it implies
\begin{equation}\label{eq_temp_int}
\int_{\T} |\partial_{x_1} \rho(x_1,x_2,t)| dx_1 \geq |c_1-c_0| \quad\text{ for any } x_2\in I(t), t\geq 0.
\end{equation}
Integrating this in $x_2$ and using $|I(t)|\geq \frac{m(D_1)}{2\pi}$, we have
$ \int_{\T^2} |\partial_{x_1} \rho(x,t)| \,dx \geq \frac{m(D_1) |c_1-c_0|}{2\pi}$  for all $t\geq 0, $
thus Cauchy-Schwartz yields
\begin{equation}\label{czrhox1}
\int_{\T^2} |\partial_{x_1} \rho|^2 \, dx \geq \frac{1}{4\pi^2} \left(\int_{\T^2} |\partial_{x_1} \rho| \,dx\right)^2 = \frac{m(D_1)^2 |c_1-c_0|^2}{16\pi^4} >0 \quad\text{ for all }t\geq 0.
\end{equation}
Applying the interpolation inequality
$ \|f\|_{L^2(\T^2)} \leq \|f\|_{\dot{H}^{-1}(\T^2)}^{\frac{s}{s+1}} \|f\|_{\dot{H}^s(\T^2)}^{\frac{1}{s+1}} $
for $s>0$ with $f=\partial_{x_1}\rho$, we have
\begin{equation}\label{Hn}
\delta(t) = \|\partial_{x_1}\rho\|_{\dot{H}^{-1}(\T^2)}^2 \geq \|\partial_{x_1}\rho\|_{L^2(\T^2)}^{2+\frac{2}{s}}\|\partial_{x_1}\rho\|_{\dot{H}^s(\T^2)}^{-\frac{2}{s}} \quad\text{ for all }s>0, t\geq 0.
\end{equation}
Plugging \eqref{Hn}, \eqref{czrhox1} into \eqref{intH} we obtain \eqref{periodic_int_bubbles}. Finally, combining \eqref{periodic_int_bubbles} with the fact that  $\int_1^\infty t^{-1}dt=\infty$ gives \eqref{periodic_ptwise_bubbles} as a direct consequence. \end{proof}

The growth for ``bubble'' solutions can be easily adapted to the bounded strip case as follows.

\begin{corollary}\label{cor_bubble}
Let $\Omega=S=:\T\times[-\pi,\pi]$, and assume $\rho_0$ satisfies scenario (S3). Suppose there exists a simple closed curve $\Gamma_0 \subset S^\circ$ enclosing a simply-connected domain $D_0\subset S$, and $\rho_0$ satisfies $\rho_0|_{\Gamma_0}=\text{const}$ and $\inf_{\Gamma_0} |\nabla\rho_0|>0$. Assuming that there is a global-in-time smooth solution $\rho(x,t)$ to \eqref{ipm} with initial data $\rho_0$, we have
\begin{equation}
\label{strip_int_bubbles}
\int_0^\infty \|\partial_{x_1} \rho(\cdot, t)\|_{\dot{H}^s(S)}^{-\frac{2}{s}} dt \leq  C(s,\rho_0)\quad\text{ for all }s>0.
\end{equation}
which implies
\begin{equation}
\label{strip_ptwise_bubbles}
 \limsup_{t\to\infty} t^{-\frac{s}{2}} \|\partial_{x_1}\rho(t)\|_{\dot{H}^s(S)}=\infty \quad\text{ for all }s>0.
\end{equation}
\end{corollary}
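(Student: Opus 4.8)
The plan is to follow the proof of Proposition~\ref{prop_bubble} almost verbatim, replacing $\T^2$ by $S$ throughout and checking that each step survives the presence of the top and bottom boundaries. The two pillars are: (i) Lemma~\ref{energy_lemma}, which applies to scenario (S3) and gives $\int_0^\infty \delta(t)\,dt \leq C(\rho_0) < \infty$ with $\delta(t) = \|\partial_{x_1}\rho(\cdot,t)\|_{\dot H^{-1}(S)}^2$; and (ii) a topological argument showing $\|\partial_{x_1}\rho(\cdot,t)\|_{L^1(S)} \geq c_\ast > 0$ uniformly in $t$. Combining these by interpolation yields \eqref{strip_int_bubbles}, and \eqref{strip_ptwise_bubbles} then follows from $\int_1^\infty t^{-1}\,dt = \infty$ exactly as before.

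For (ii), since $\inf_{\Gamma_0}|\nabla\rho_0| > 0$ and $\Gamma_0$ bounds the simply-connected domain $D_0 \subset S^\circ$, we first pick, as in the proof of Proposition~\ref{prop_bubble}, a simple closed curve $\Gamma_1 \subset D_0$ on which $\rho_0 \equiv c_1 \neq c_0 := \rho_0|_{\Gamma_0}$, enclosing a simply-connected domain $D_1$ with $\overline{D_1} \subset D_0$. Because $u(\cdot,t)\cdot n = 0$ on $\partial S$, the trajectory map $\Phi_t$ defined by \eqref{trajectory} is, while the solution stays smooth, a measure-preserving diffeomorphism of $S$ onto itself mapping $S^\circ$ to $S^\circ$ and $\partial S$ to $\partial S$; in particular it preserves simple-connectedness and the nesting $\Phi_t(D_1) \subset \Phi_t(D_0)$, and $m(\Phi_t(D_1)) = m(D_1)$. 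Since $\rho$ is transported, $\rho(\cdot,t) = c_0$ on $\Phi_t(\Gamma_0)$ and $c_1$ on $\Phi_t(\Gamma_1)$. Let $\Pi_2: S \to [-\pi,\pi]$ be the projection onto $x_2$ and set $I(t) := \Pi_2(\Phi_t(\Gamma_1))$. As in the torus case, a horizontal circle $\T \times \{x_2\}$ is non-contractible in $S$ and hence cannot lie inside the simply-connected set $\Phi_t(D_1)$ (or $\Phi_t(D_0)$); by the Jordan curve theorem it therefore meets $\Phi_t(D_1)$ iff it meets $\Phi_t(\Gamma_1)$, so $\Pi_2(\Phi_t(D_1)) = I(t)$ and similarly $\Pi_2(\Phi_t(D_0)) = \Pi_2(\Phi_t(\Gamma_0))$. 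Since $\Phi_t(D_1)$ lies in the slab $\T \times I(t)$ of measure $2\pi|I(t)|$, we get $|I(t)| \geq m(D_1)/(2\pi)$, and $I(t) \subset \Pi_2(\Phi_t(\Gamma_0))$. Thus for every $x_2 \in I(t)$ the circle $\T \times \{x_2\}$ meets both $\{\rho(\cdot,t)=c_0\}$ and $\{\rho(\cdot,t)=c_1\}$, whence $\int_\T |\partial_{x_1}\rho(x_1,x_2,t)|\,dx_1 \geq |c_1-c_0|$; integrating over $x_2 \in I(t)$ gives $\|\partial_{x_1}\rho(\cdot,t)\|_{L^1(S)} \geq m(D_1)|c_1-c_0|/(2\pi) =: c_\ast > 0$ for all $t\geq 0$.

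It remains to interpolate. Cauchy--Schwarz on $S$, which has measure $4\pi^2$, gives $\|\partial_{x_1}\rho(\cdot,t)\|_{L^2(S)}^2 \geq (4\pi^2)^{-1}c_\ast^2 > 0$. Next, the inequality $\|f\|_{L^2(S)} \leq \|f\|_{\dot H^{-1}(S)}^{s/(s+1)}\|f\|_{\dot H^s(S)}^{1/(s+1)}$ holds for $s>0$: expanding $f$ in the orthonormal eigenbasis $\{\omega_{p,q}\}$ of $-\Delta_S$ with (strictly positive) eigenvalues $\lambda_{p,q}$, the three norms are the weighted sums of $|\langle f,\omega_{p,q}\rangle|^2$ with weights $1$, $\lambda_{p,q}^{-1}$, $\lambda_{p,q}^{s}$ respectively, and the inequality is H\"older's inequality with exponents $\frac{s+1}{s}$ and $s+1$. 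Applying this with $f=\partial_{x_1}\rho(\cdot,t)$ yields $\delta(t) = \|\partial_{x_1}\rho\|_{\dot H^{-1}(S)}^2 \geq \|\partial_{x_1}\rho\|_{L^2(S)}^{2+2/s}\|\partial_{x_1}\rho\|_{\dot H^s(S)}^{-2/s} \geq c(s,\rho_0)\|\partial_{x_1}\rho(\cdot,t)\|_{\dot H^s(S)}^{-2/s}$, and integrating in $t$ against $\int_0^\infty\delta(t)\,dt \leq C(\rho_0)$ produces \eqref{strip_int_bubbles}.

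The main obstacle is topological bookkeeping rather than analysis: one must confirm that in the closed strip the flow map genuinely restricts to a homeomorphism of the open strip (so the bubble never merges with the boundary and remains a nested pair of simply-connected domains), and that a horizontal circle meeting the interior of a region bounded by a contractible Jordan curve must meet that curve. Both follow from $\Phi_t$ being a diffeomorphism of $S$ together with the non-contractibility of horizontal circles in $S$, so no genuinely new difficulty arises compared with Proposition~\ref{prop_bubble}; the spectral interpolation inequality and the $L^2$--$L^1$ bound are the same as in the torus case, with constants adjusted for $|S|=4\pi^2$.
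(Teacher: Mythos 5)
Your proposal is correct and follows essentially the same route as the paper: reduce to the argument of Proposition~\ref{prop_bubble}, using the nested level curves $\Gamma_0,\Gamma_1$, the boundary-preserving flow map (thanks to $u\cdot n=0$ on $\partial S$), the projection argument giving $\|\partial_{x_1}\rho(\cdot,t)\|_{L^1(S)}\geq c_\ast>0$, Lemma~\ref{energy_lemma} for $\int_0^\infty\delta(t)\,dt<\infty$, and the interpolation $\|f\|_{L^2(S)}\leq\|f\|_{\dot H^{-1}(S)}^{s/(s+1)}\|f\|_{\dot H^s(S)}^{1/(s+1)}$ proved spectrally via the eigenbasis of $-\Delta_S$, exactly as the paper indicates. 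No gaps; your extra topological bookkeeping only makes explicit what the paper leaves implicit.
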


\begin{proof}
The proof is almost identical to the proof of Proposition~\ref{prop_bubble}. Again, there exists $\Gamma_1$ enclosing a simply-connected domain $D_1 \subset D_0$, such that $\rho_0|_{\Gamma_1}= c_1$, and $c_1\neq c_0:=\rho_0|_{\Gamma_0}$. Defining $I(t) :=  \Pi_2(\Phi_t (\Gamma_1))$ (which is now a subset in $[-\pi,\pi]$), the same argument gives that $|I(t)|\geq \frac{m(D_1)}{2\pi}$ and \eqref{eq_temp_int}. Thus \eqref{czrhox1} still holds (except that the integral now takes place in $S$ instead of $\T^2$), and the rest of the proof remains unchanged.
Note that the interpolation inequality $ \|f\|_{L^2(S)} \leq \|f\|_{\dot{H}^{-1}(S)}^{\frac{s}{s+1}} \|f\|_{\dot{H}^s(S)}^{\frac{1}{s+1}}$ holds in $S$ as well by a straightforward argument using the eigenfunction expansion
similar to the standard Fourier argument in $\T^2.$
\end{proof}

\color{black}

Our next result concerns ``layered'' initial data, which we define below.

\begin{definition}\label{def_layer}
For $\rho_0\in C^\infty(\T^2)$, we say it has a \emph{layered structure} if there exists a measure-preserving smooth diffeomorphism $\phi:\T^2\to\T^2$ that satisfies $\phi(\T\times\{\pi\})=\T\times\{\pi\}$, such that $\rho_s = \rho_0(\phi^{-1}(x))$ is a \emph{stratified solution}, i.e. $\rho_s(x)$ only depends on $x_2$. In this case, we call $\rho_s$ the \emph{stratified state corresponding to $\rho_0$}.
\end{definition}
Note that any layered initial data $\rho_0$ has a unique corresponding stratified state $\rho_s$. (Even though the mapping $\phi$ is not unique, e.g. one can shift $\phi$ by any $(a,0)$). To see this, take any curve $\Gamma$ such that $\rho_0|_{\Gamma}=c$ with $\inf_{\Gamma} |\nabla\rho_0| > 0$, and denote by $D$ the region bounded between $\Gamma$ and $\T\times\{\pi\}$. Since $\phi$ is measure-preserving and $\phi(\T\times\{\pi\})=\T\times\{\pi\}$, we know $\phi(D)=\T\times[x_2,\pi]$ must have the same area as $D$, leading to $\rho_s(\pi-\frac{|D|}{2\pi})=c$.
Since $\rho_0 \in C^\infty(\T^2)$ (thus $\rho_s$ is also smooth), Sard's theorem allows us to run this argument for a.e. $c$, which defines $\rho_s$ uniquely for all $x$.
See Figure~\ref{fig_layer} for an illustration of a layered initial data $\rho_0$ and its corresponding stratified state $\rho_s$.

\begin{figure}[htbp]
\begin{center}
\includegraphics[scale=1]{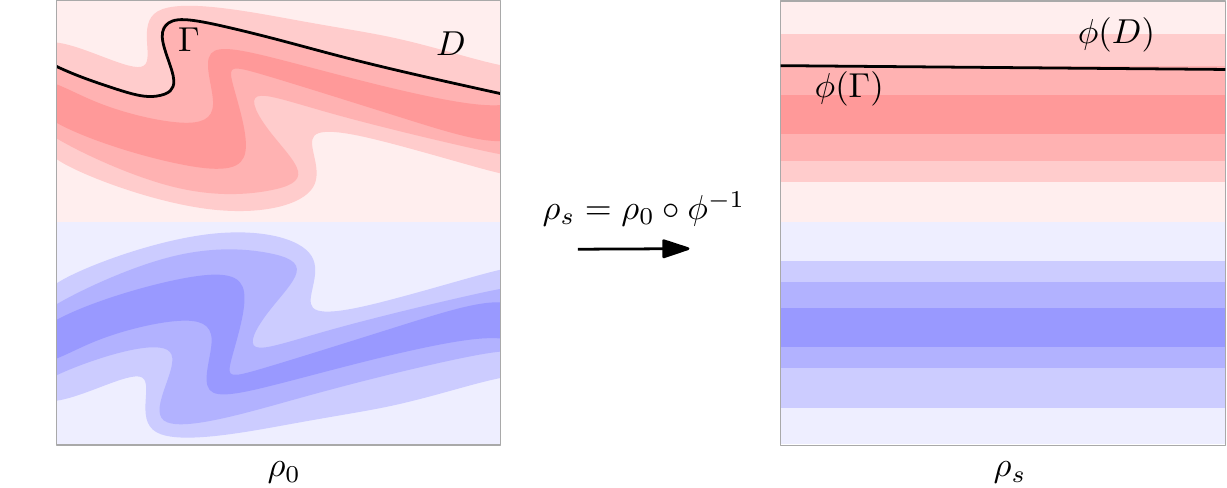}
\caption{An illustration of a ``layered'' initial data $\rho_0$ and its corresponding stratified state $\rho_s$.\label{fig_layer}}
\end{center}
\end{figure}

Clearly, Theorem~\ref{thm_periodic} cannot be applied to any layered $\rho_0$ since $\rho_0(0,x_2)\not\equiv 0$, and  Proposition~\ref{prop_bubble} fails too since there is no level set $\Gamma$ enclosing a simply-connected region. Despite these difficulties, we will show that small scale formation can still happen to $\rho_0$, as long as its potential energy is strictly lower than that of $\rho_s$.

\begin{proposition}\label{prop_layer}
Let $\Omega=\mathbb{T}^2$. Assume $\rho_0$ satisfies scenario (S2), and it has a layered structure in the sense of Definition~\ref{def_layer}, with corresponding stratified state denoted by $\rho_s$. In addition, suppose
\begin{equation}\label{layer_energy}
E(0) = \int_{\T^2} \rho_0(x) x_2 dx < \int_{\T^2} \rho_s(x) x_2 dx =: E_s.
\end{equation}
 Then the estimates \eqref{periodic_int_bubbles} and \eqref{periodic_ptwise_bubbles} hold, given there is a global-in-time smooth solution $\rho(x,t)$ to \eqref{ipm} with initial data $\rho_0$.
\end{proposition}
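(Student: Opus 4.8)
The plan is to show that $\delta(t) = \|\partial_{x_1}\rho(\cdot,t)\|_{\dot H^{-1}(\T^2)}^2$ stays bounded below by a positive constant, and then feed this into the interpolation machinery of Proposition~\ref{prop_bubble}. Specifically, once we establish
\[
\|\partial_{x_1}\rho(\cdot,t)\|_{L^2(\T^2)}^2 \geq c(\rho_0) > 0 \quad\text{for all }t\geq 0,
\]
the interpolation inequality $\|f\|_{L^2}\leq \|f\|_{\dot H^{-1}}^{s/(s+1)}\|f\|_{\dot H^s}^{1/(s+1)}$ with $f = \partial_{x_1}\rho$ gives $\delta(t) \geq \|\partial_{x_1}\rho\|_{L^2}^{2+2/s}\|\partial_{x_1}\rho\|_{\dot H^s}^{-2/s}$ exactly as in \eqref{Hn}, and combining with $\int_0^\infty \delta(t)\,dt \leq C_0(\rho_0)$ from Lemma~\ref{energy_lemma} yields \eqref{periodic_int_bubbles} and hence \eqref{periodic_ptwise_bubbles}. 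So the entire content is the uniform lower bound on $\|\partial_{x_1}\rho(\cdot,t)\|_{L^2}$.

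To get that lower bound I would argue by contradiction using the monotone energy, which is the one extra structural input here compared to the bubble case. Since $\rho(\cdot,t)$ is a measure-preserving rearrangement of $\rho_0$ (via the flow map $\Phi_t$), and every stratified state is a measure-preserving rearrangement of the same profile, the key observation is: among all functions equimeasurable with $\rho_0$, the potential energy $\int_{\T^2} x_2 \rho\,dx$ is \emph{uniquely} minimized — up to horizontal translation — by the stratified state $\rho_s^{\downarrow}$ which places the heaviest fluid at the bottom (monotone decreasing in $x_2$). More importantly for us, the corresponding stratified state $\rho_s$ from Definition~\ref{def_layer} need not be this minimizer, but the hypothesis \eqref{layer_energy} tells us $E(0) < E_s$, and since $E(t)$ is decreasing we have $E(t) \leq E(0) < E_s$ for all $t$. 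The plan is to show that if $\|\partial_{x_1}\rho(\cdot,t_n)\|_{L^2}\to 0$ along some sequence $t_n\to\infty$, then $\rho(\cdot,t_n)$ converges (in $L^2$, using that all $\rho(\cdot,t)$ lie in a fixed equimeasurable class, hence in a weakly compact set, and the $\partial_{x_1}$-smallness forces the limit to be $x_1$-independent) to a stratified state $\tilde\rho_s$. But any $x_1$-independent $L^2$ limit of rearrangements of $\rho_0$ is itself a rearrangement of $\rho_0$ depending only on $x_2$, hence — by the uniqueness argument for layered data given right before Proposition~\ref{prop_layer} — it must equal $\rho_s$. Then $E(t_n)\to \int x_2\tilde\rho_s\,dx = E_s$, contradicting $E(t_n)\leq E(0) < E_s$.

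The main obstacle, and where care is needed, is making the compactness/convergence step rigorous: from $\|\partial_{x_1}\rho(\cdot,t_n)\|_{\dot H^{-1}}\to 0$ (or $\to$ small) one only directly controls a negative Sobolev norm, so one must upgrade this to genuine $L^2$-proximity to the set of $x_1$-independent functions. This can be done by an interpolation exactly as above, \emph{but in the wrong direction} — instead, the clean route is: $\|\partial_{x_1}\rho\|_{\dot H^{-1}}$ small together with $\|\rho\|_{L^2} = \|\rho_0\|_{L^2}$ fixed does not by itself give $\|\partial_{x_1}\rho\|_{L^2}$ small, so one cannot immediately conclude $\rho$ is close to $x_1$-independent in $L^2$. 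The fix is to observe that it suffices to work with $\bar\rho(x_2,t) := \frac{1}{2\pi}\int_\T \rho(x_1,x_2,t)\,dx_1$, the $x_1$-average; this is uniformly Lipschitz-bounded independent of $t$ is \emph{false} in general, but $\|\bar\rho(\cdot,t)\|_{L^2}\leq \|\rho_0\|_{L^2}$ and, crucially, $E(t) = \int_{\T^2} x_2 \bar\rho(x_2,t)\,dx$ depends only on $\bar\rho$. So one argues instead: the family $\{\bar\rho(\cdot,t)\}$ is bounded in $L^2(\T)$ hence weakly precompact; along a subsequence $\bar\rho(\cdot,t_n)\rightharpoonup \psi$ weakly, giving $E(t_n)\to \int x_2\psi\,dx_2$. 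The contradiction with $E_s$ then requires showing $\int x_2\psi\,dx_2 \geq E_s$, i.e. that the $x_1$-averaged profile cannot lower the energy below $E_s$ — which follows because $\|\partial_{x_1}\rho(\cdot,t_n)\|_{\dot H^{-1}}\to 0$ forces $\rho(\cdot,t_n)\to\bar\rho(\cdot,t_n)$ in $\dot H^{-1}$, hence (since $\rho(\cdot,t_n)$ are equimeasurable with $\rho_0$ and $\dot H^{-1}$-convergence plus $L^2$-boundedness gives $L^2$-convergence along a further subsequence) the limit is a stratified rearrangement of $\rho_0$, forcing it to be $\rho_s$ by uniqueness and $\int x_2\psi = E_s$. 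I expect assembling this compactness argument — deciding exactly which topology makes every arrow legitimate, and invoking the rearrangement-uniqueness lemma correctly — to be the delicate part; everything downstream is the same interpolation bookkeeping already carried out in Proposition~\ref{prop_bubble}.
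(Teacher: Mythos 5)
Your reduction of the problem to a time-uniform lower bound on $\|\partial_{x_1}\rho(\cdot,t)\|_{L^2}$, followed by the interpolation $\|f\|_{L^2}\leq\|f\|_{\dot H^{-1}}^{s/(s+1)}\|f\|_{\dot H^s}^{1/(s+1)}$ and $\int_0^\infty\delta(t)\,dt<\infty$, is exactly the right (and the paper's) endgame. The problem is the compactness argument you propose for the lower bound itself; it has two genuine gaps. First, the pivotal claim that ``$\dot H^{-1}$-convergence plus $L^2$-boundedness gives $L^2$-convergence along a further subsequence'' is false: $\sin(nx_2)$ is bounded in $L^2$, tends to $0$ in $\dot H^{-1}$, and has no strongly convergent subsequence. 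Without strong convergence you only get a weak limit $\psi$, and weak limits do not preserve equimeasurability nor the energy ordering you need: a sequence of rearrangements of $\rho_0$ that develops fine horizontal layering converges weakly to an $x_1$-independent profile whose potential energy can be strictly \emph{below} $E_s$, so the desired contradiction $\int x_2\psi\,dx\geq E_s$ simply does not follow. Second, even if you had strong $L^2$ convergence (so the limit is an $x_1$-independent function equimeasurable with $\rho_0$), that alone does not identify the limit as $\rho_s$: any vertical reshuffling $g(\sigma(x_2))$ with $\sigma$ measure preserving is also a stratified rearrangement of $\rho_0$, generally with different (possibly smaller) energy. The uniqueness statement before Proposition~\ref{prop_layer} pins down $\rho_s$ only among stratified states reachable by measure-preserving \emph{diffeomorphisms} fixing $\T\times\{\pi\}$, and that layered/topological correspondence is not known to survive an $L^2$ limit; you would need an additional argument here.

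The paper's proof avoids limits altogether and is quantitative at each fixed time: since $\rho(\cdot,t)$ has the layered structure with the same $\rho_s$ (via $\Psi_t=\Phi_t\circ\phi^{-1}$, a measure-preserving diffeomorphism fixing $\T\times\{\pi\}$), for every height $x_2$ the curve $\Psi_t(\T\times\{x_2\})$ must intersect $\T\times\{x_2\}$, so the value $\rho_s(x_2)$ is attained by $\rho(\cdot,t)$ somewhere on that horizontal circle. Hence $|\rho_s(x_2)-\rho(x_1,x_2,t)|\leq\int_{\T}|\partial_{x_1}\rho(s_1,x_2,t)|\,ds_1$ for every $x_1$, and integrating this against the uniform energy gap $b:=E_s-E(0)\leq E_s-E(t)$ (monotonicity of $E$) gives $\|\partial_{x_1}\rho(\cdot,t)\|_{L^1(\T^2)}\geq b/(2\pi^2)$ for all $t$; Cauchy--Schwarz then yields the $L^2$ lower bound and the rest is your interpolation bookkeeping. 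If you want to keep a compactness-style argument, you would have to first prove strong (not weak) precompactness and the persistence of the layered correspondence in the limit, neither of which is supplied by the smallness of $\|\partial_{x_1}\rho\|_{\dot H^{-1}}$ or $\|\partial_{x_1}\rho\|_{L^2}$ alone; the direct intersection argument is both simpler and quantitative.
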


\begin{remark}
 As a side note, it is not hard to construct layered initial data $\rho_0$ satisfying \eqref{layer_energy} -- see Figure~\ref{fig_layer_2} for an illustration.
\end{remark}

\begin{figure}[htbp]
\begin{center}
\includegraphics[scale=1]{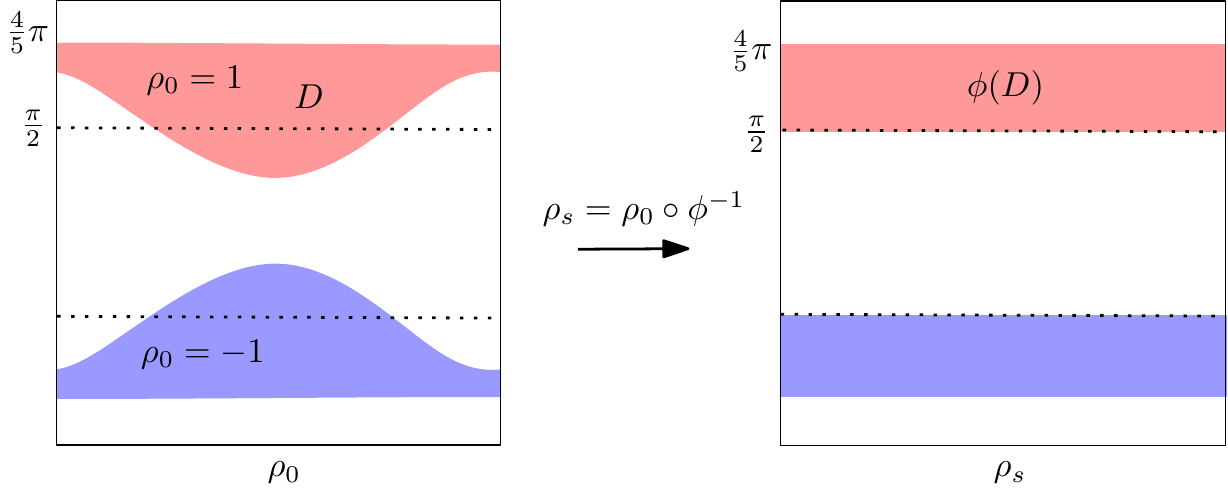}
\caption{An example of a ``layered'' $\rho_0$ satisfying the inequality \eqref{layer_energy}. Let $\rho_0$ be odd in $x_2$, and set $\rho_0=1_D$ in the upper half of $\T^2$, where $D$ is bounded between $x_2=\frac{4}{5}\pi$ and $x_2 = \frac{\pi}{2}-\frac{\pi}{4}\cos( x_1)$. The right figure shows its corresponding stratified state $\rho_s$. Such (discontinuous) $\rho_0$ has a potential energy strictly less than that of $\rho_s$, since the center of mass of $D$ is lower than $\phi(D)$. One can then slightly mollify $\rho_0$ to obtain a smooth layered initial data satisfying \eqref{layer_energy}. \label{fig_layer_2}}
\end{center}
\end{figure}

\begin{proof} To begin with, we will show that for any $t\geq 0$, $\rho(\cdot,t)$ also has a layered structure with corresponding stratified state being $\rho_s$. The assumption on $\rho_0$ gives $\rho_s\circ\phi = \rho_0$ for some measure-preserving diffeomorphism $\phi$. Combining this with $\rho(\Phi_t(x),t)=\rho_0(x)$ (where $\Phi_t(x)$ is the flow trajectory given by \eqref{trajectory}), we have $\rho_s = \rho((\Phi_t \circ \phi^{-1})(x),t)$ for all times. Here $\Phi_t \circ \phi^{-1}:\T^2\to\T^2$ is a measure-preserving diffeomorphism, and it keeps the set $\T\times\{\pi\}$  invariant, since both $\phi$ and $\Phi_t$ have this property:  $\phi$ has this property due to Definition~\ref{def_layer}, whereas $\Phi_t$ has this property since $u_2=0$ on $x_2=\pi$ for all times.

Let us denote
$
b := E_s - E(0) >0,
$ where the strict positivity is due to \eqref{layer_energy}. By Lemma~\ref{energy_lemma},  $E(t)$ is non-increasing in time, thus
\begin{equation}\label{eq_b}
E_s - E(t) \geq E_s - E(0) = b > 0 \quad\text{ for all }t\geq 0.
\end{equation}
Since $\rho_s$ is the only stratified state that is topologically reachable from $\rho(\cdot,t)$ (among all measure-preserving diffeomorphisms that keeps $\T\times\{\pi\}$ invariant), the fact that $\rho(\cdot, t)$ has a potential energy strictly less than $\rho_s$ (with the gap being at least $b$) intuitively suggests that $\rho(\cdot,t)$ cannot have all level sets very close to horizontal. Below we will show that this is indeed true, in the sense that $\int_{\T^2} |\partial_{x_1}\rho(x,t)|dx$ is bounded below by a positive constant for all times.

By \eqref{eq_b} and the definition of potential energy $E(t)$, we have
\begin{equation}\label{eq_temp00}
b \leq E_s - E(t) = \int_{\T^2} x_2 (\rho_s(x) -\rho(x,t)) dx \leq \pi \int_{\T^2}
|\rho_s(x) -\rho(x,t) |dx,
\end{equation}
and let us take a closer look at the integrand. In the first paragraph of the proof we showed $\rho_s(x)=\rho(\Psi_t(x),t)$, where $\Psi_t := \Phi_t \circ \phi^{-1}$ is a measure-preserving diffeomorphism that keeps $\T\times\{\pi\}$ invariant. As a result, for any $t\geq 0$ and $x_2\in \T$, $\Psi_t(\T\times\{x_2\})$ must have a non-empty intersection with $\T\times\{x_2\}$, i.e. there exists $\tilde x_1$ and $\bar x_1$ depending on $x_2$ and $t$, such that $\Psi_t(\tilde x_1,x_2) = (\bar x_1, x_2)$. Combining this with the fact that $\rho_s$ is a function of $x_2$ only, we have
\[
|\rho_s(x)-\rho(x,t)| = |\rho_s(\tilde x_1, x_2)-\rho(x,t)|  = |\rho(\bar x_1,x_2,t)-\rho(x_1,x_2,t)| \leq \int_{\T} |\partial_{x_1}\rho(s_1, x_2, t)| ds_1\]
for any $x=(x_1,x_2)\in \T^2, t\geq 0$.
Plugging this into \eqref{eq_temp00} gives
\[
b \leq 2\pi^2 \int_{\T^2} |\partial_{x_1}\rho(x,t)|dx,
\]
leading to $\int_{\T^2} |\partial_{x_1}\rho(x,t)|dx \geq \frac{b}{2\pi^2}>0$ for all $t\geq 0$.
Now that we have a positive lower bound on $\|\partial_{x_1} \rho(t)\|_{L^1(\T^2)}$, the rest of the argument can proceed the same way as in \eqref{czrhox1} and \eqref{Hn} in the proof of Proposition~\ref{prop_bubble}, allowing us to obtain the same estimates \eqref{periodic_int_bubbles}--\eqref{periodic_ptwise_bubbles}.
\end{proof}

\section{Instability of horizontally stratified steady states}

In this section we aim to prove the two nonlinear instability results Theorems~\ref{thm_instability_torus} and \ref{thm_instability_strip} in $\T^2$ and $S$ respectively. We start with Theorem~\ref{thm_instability_torus}, which shows that any stratified steady state $\rho_s$ in $\T^2$ that is odd in $x_2$ is nonlinearly unstable in an arbitrarily high Sobolev space. The idea is to locate a point $x_0 \in \T^2$ where locally $\rho_s$ has heavier density on top of lighter one, then use a circular flow to slightly perturb $\rho_s$ near $x_0$ to construct a ``layered'' initial data that satisfies the assumption of Proposition~\ref{prop_layer}.
\color{black}
\label{sec_instability}

\begin{proof}[\textbf{\textup{Proof of Theorem~\ref{thm_instability_torus}}}]
We claim that for any $\epsilon>0$ and $k>0$, we can construct a $\rho_0 \in C^\infty(\T^2)$ satisfying all the following:
\begin{enumerate}
\item[(a)] $\rho_0$ is odd in $x_2$, and has a layered structure in the sense of Definition~\ref{def_layer} with corresponding stratified state $\rho_s$.
\item[(b)] $\|\rho_0 - \rho_s\|_{H^k(\T^2)} < \epsilon$.
\smallskip
\item[(c)] $E(0) < \int_{\T^2} x_2\rho_s dx =: E_s$.
\end{enumerate}

Once these are shown to be true, a direct application of Proposition~\ref{prop_layer} immediately yields the infinite-in-time growth results \eqref{periodic_int_bubbles} and \eqref{periodic_ptwise_bubbles}. Since $\partial_{x_1}\rho(\cdot,t)= \partial_{x_1}(\rho(\cdot,t)-\rho_s)$, \eqref{periodic_ptwise_bubbles} directly implies \eqref{instab_torus_eq}, finishing the proof.

In the rest of the proof we aim to construct $\rho_0$ and prove the claim.
We will focus on the construction of $\rho_0$ in the upper half of torus $\T^2_+ := \T \times [0,\pi]$, and at the end we will extend it to the lower part $\T^2_-$ by an odd extension.

Recall that $\rho_s(x)=g(x_2)$ is a smooth stratified state that is odd in $x_2$.  Thus $g$ is odd and smooth in $\T$. Such $g$ cannot be monotone, so there exists some $h_0 \in (0,\pi)$ such that $g'(h_0) >0$.
For $0<\epsilon_0\ll 1$ to be fixed later, let $\varphi_{\epsilon_0} \in C^\infty_c(\R)$ be non-negative and supported on $[\epsilon_0, 2\epsilon_0]$. Let $v:\T^2_+ \to \R^2$ be the velocity field of an incompressible circular flow around $x_0 := (0, h_0)$, given by
\begin{equation}\label{def_v}
v(x) = (x-x_0)^\perp \varphi_{\epsilon_0}(|x-x_0|) \quad\text{ for }x\in\T^2_+.
\end{equation}
For any $\tau\geq 0$, let $\tilde\rho(\cdot, \tau)$ be the solution to
\begin{equation}\label{transport_v}
\partial_{\tau}\tilde\rho + v\cdot\nabla\tilde\rho = 0 \quad \text{ in }\T^2_+ \times(0,\infty)
\end{equation}
 with initial data $\tilde\rho(\cdot,0)=\rho_s$. Since $v$ is supported in a small annulus $B(x_0, 2\epsilon_0) \setminus B(x_0,\epsilon_0)$, clearly $\tilde\rho(\cdot,\tau)=\rho_s$ outside the annulus.
Intuitively, since $\rho_s$ has heavier density on top of lighter density locally near $x_0$, we formally expect that  $\tilde \rho(\tau)$ should have lower potential energy than $\rho_s$ for a short time. (Here the ``time'' $\tau$ is the perturbation parameter, and has nothing to do with the actual time in \eqref{ipm}).  Below we will rigorously show
\begin{equation}\label{energy_perturb}
F(\tau) := \int_{\T^2_+}  (\tilde\rho(x, \tau) -\rho_s) x_2 dx < 0 \quad\text{ for all }0<\epsilon_0\ll 1 \text{ and  }0<\tau\ll 1.
\end{equation}
Since the integral can be reduced to the set $B(x_0, 2\epsilon_0) \setminus B(x_0,\epsilon_0)$, it is convenient to write it in polar coordinates centered at the point $x_0$. Using the change of variables $x_1 = r\cos\theta, x_2 = h_0 + r\sin\theta$, we have
\begin{eqnarray}
\nonumber F(\tau) &=& \displaystyle\int_{\epsilon_0}^{2\epsilon_0} \int_0^{2\pi} \tilde\rho( r\cos\theta, h_0 + r\sin\theta, \tau) (h_0 + r\sin\theta) r d\theta dr - \underbrace{\int_{B(x_0, 2\epsilon) \setminus B(x_0,\epsilon)} \rho_s x_2 dx}_{=: C_s}\\
\nonumber &=& \displaystyle\int_{\epsilon_0}^{2\epsilon_0} \int_0^{2\pi} \rho_s(r\cos(\theta-\varphi_{\epsilon_0}(r)\tau), h_0 + r\sin(\theta-\varphi_{\epsilon_0}(r)\tau))  (h_0 + r\sin\theta) r d\theta dr - C_s\\
\nonumber &=& \displaystyle\int_{\epsilon_0}^{2\epsilon_0} \int_0^{2\pi} g(h_0 + r\sin(\theta-\varphi_{\epsilon_0}(r)\tau))  (h_0 + r\sin\theta) r d\theta dr - C_s\\
&=:& \displaystyle\int_{\epsilon_0}^{2\epsilon_0} f(r,\tau) dr - C_s, \label{eq_f}
\end{eqnarray}
where the second identity follows from the facts that $\tilde\rho(\cdot,\tau)$ is transported by $v$ with initial data $\rho_s$, and $v$ is a circular flow with angular velocity $\varphi_{\epsilon_0}(r)$ along $\partial B(x_0, r)$.
We can rewrite $f(r,\tau)$ as
\[
\begin{split}
f(r,\tau) = & r^2 \int_0^{2\pi} g(h_0 + r\sin(\theta-\varphi_{\epsilon_0}(r)\tau)) \sin\theta d\theta + h_0 r  \int_0^{2\pi} g(h_0 + r\sin(\theta-\varphi_{\epsilon_0}(r)\tau))  d\theta,
\end{split}
\]
where the second integral is constant in $\tau$ using the substitution $\vartheta = \theta-\varphi_{\epsilon_0}(r)\tau$. Thus taking the $\tau$ derivative gives
\[
\frac{d}{d\tau} f(r,\tau) = -r^3 \varphi_{\epsilon_0}(r) \int_0^{2\pi} g'(h_0 + r\sin(\theta-\varphi_{\epsilon_0}(r)\tau))  \cos(\theta-\varphi_{\epsilon_0}(r) \tau) \sin\theta \,d\theta,
\]
leading to
\begin{equation}\label{derivative_temp}
\frac{d}{d\tau} f(r,\tau)\Big|_{\tau=0} = -r^3 \varphi_{\epsilon_0}(r) \int_0^{2\pi} g'(h_0 + r\sin\theta)  \cos\theta \sin\theta \,  d\theta = 0,
\end{equation}
since the integrand is odd about $\theta=\frac{\pi}{2}$. Taking one more derivative and setting $\tau=0$, we have
\[
\begin{split}
\frac{d^2}{d\tau^2} f(r,\tau)\Big|_{\tau=0} &= r^4 \varphi_{\epsilon_0}^2(r)\int_0^{2\pi} g''(h_0 + r\sin\theta)  (\cos\theta)^2 \sin\theta\, d\theta \\
&\quad - r^3 \varphi_{\epsilon_0}^2(r)\int_0^{2\pi} g'(h_0 + r\sin\theta) (\sin \theta)^2 d\theta\\
&=\varphi_{\epsilon_0}^2(r)\left(  -\pi r^3  g'(h_0) + O(r^4)\right).
\end{split}
\]
Since $h_0$ is chosen such that $g'(h_0)>0$, for all sufficiently small $0<\epsilon_0\ll 1$ we have
\[
\frac{d^2}{d\tau^2} f(r,\tau)\Big|_{\tau=0} \leq -\frac{1}{2}\pi r^3 \varphi_{\epsilon_0}^2(r) g'(h_0)  < 0 \quad\text{ for all }r\in (\epsilon_0, 2\epsilon_0).
\]
Plugging \eqref{derivative_temp} and the above  into \eqref{eq_f} gives that $\frac{d}{d\tau} F(\tau)\big|_{\tau=0} =0$ and $\frac{d^2}{d\tau^2} F(\tau)\big|_{\tau=0} \leq -c(\epsilon_0) g'(h_0) < 0$. Combining these with $F(0)=0$ gives \eqref{energy_perturb}.

Finally, we use odd reflection to extend $\tilde\rho(\cdot,\tau)$ to $\T^2_-$, which is equivalent with simutaneously applying a circular flow to $\rho_s$ near $(0,-h_0)$ in the opposite direction as $v|_{\T^2_+}$. We then set $\rho_0 := \tilde\rho(\tau)$ with $0<\tau\ll 1$ sufficiently small, and let us check that it satisfies the claim (a,b,c): (a) is a direct consequence from the definition, since $\rho_0$ can be reached from $\rho_s$ by an explicit measure-preserving smooth flow that is only non-zero near $(0,\pm h_0)$. Also, since $ \tilde\rho(\cdot,\tau)$ is transported from $\rho_s$ with a smooth velocity field $v$, for any $k>0$, we have $\|\tilde\rho(\tau)-\rho_s\|_{H^k}\to 0$ as $\tau\to 0^+$, thus property (b) is satisfied.
As for the potential energy, note that \eqref{energy_perturb} gives that $E(0) - E_s = 2F(\tau) < 0$ when $0<\tau\ll 1$ is sufficiently small, finishing the proof of (c).
\end{proof}

Finally, we are ready to prove Theorem~\ref{thm_instability_strip} which deals with the instability on the strip. The idea is to perturb the steady state to make a small ``bubble'' localized near one point, then apply Corollary~\ref{cor_bubble}.

\begin{proof}[\textbf{\textup{Proof of Theorem~\ref{thm_instability_strip}}}]
First note that any stationary solution $\rho_s \in C^\infty(S)$ must be stratified of the form $\rho_s=g(x_2)$, since only in this case it satisfies $\|\partial_{x_1} \rho_s\|_{\dot{H}^{-1}(\Omega)}=0$ by Lemma~\ref{energy_lemma}.

Let $\varphi \in C_c^\infty(\R^2)$ be a nonnegative function  supported in $B(0,1)$ with $\varphi(0)=1$.
 For $0<\lambda< 1$, let
\[
\rho_{0\lambda}(x) := \rho_s(x) + 2A \lambda \varphi(\lambda^{-1} x) \quad\text{ for }x\in S,
\]
where $A := \|\nabla\rho_s\|_{L^\infty(S)}$.
Clearly, $\rho_{0\lambda}\in C^\infty(S)$, and $\rho_{0\lambda}=\rho_s$ in $S\setminus B(0,\lambda)$.

Let us first check that \eqref{eq_difference} is satisfied for $\rho_0:=\rho_{0\lambda}$ with $0<\lambda\ll 1$. A simple scaling argument yields that $\|D^2  (\rho_{0\lambda}-\rho_s)\|_{L^2(S)} = 2A\|D^2\varphi\|_{L^2(\R^2)}$ is invariant in $\lambda$ (where $D^2$ is any partial derivative of order 2), thus $\|\rho_{0\lambda}-\rho_s\|_{H^2(S)}$ is uniformly bounded for all $0<\lambda<1$. Combining this with $\|\rho_{0\lambda}-\rho_s\|_{L^2(S)}\leq CA \lambda^2$, we have $\|\rho_{0\lambda}-\rho_s\|_{H^{2-\gamma}(S)} \leq CA\lambda^\gamma$ for all $\gamma>0$ (where $C$ only depends on $\varphi$), where the right hand side can be made arbitrarily small for $0<\lambda\ll 1$.

We claim that for any $0<\lambda< 1$, $\rho_{0\lambda}$ satisfies the assumption of a ``bubble solution'' in Corollary~\ref{cor_bubble}. To see this, note that the definitions of $A$ and $\rho_{0\lambda}$ yields
\[
 \rho_{0\lambda}(x) = \rho_s(x) \leq \rho_s(0)+ A \lambda \quad\text{ for any }x\in\partial B(0,\lambda),
\]
whereas
\[
\rho_{0\lambda}(0) = \rho_s(0)+2A\lambda.
\]
Applying Sard's theorem \cite{Sard} to $\rho_{0\lambda}$, for almost every  $h\in (\rho_s(0)+ A \lambda,  \rho_s(0)+2A\lambda)$, we know $\{\rho_{0\lambda}=h\}$ has a connected component in $B(0,\lambda)$ on which $|\nabla \rho_{0\lambda}|$ never vanishes. Naming any such connected component $\Gamma_0$, we then have that $\rho_{0\lambda}$ satisfies the assumption in Corollary~\ref{cor_bubble}. As a result we have
the estimate \eqref{strip_ptwise_bubbles}.
Using that $\partial_{x_1}\rho(\cdot,t)=\partial_{x_1}(\rho(\cdot,t)-\rho_s)$, \eqref{strip_ptwise_bubbles} directly implies \eqref{eq_instability_strip}, thus finishes the proof.
\end{proof}

\begin{remark}
For a stratified solution $\rho_s=g(x_2)$ that does not satisfy $g'\leq 0$, the perturbation can be made small in higher Sobolev spaces. Namely, if there exists $x_0\in S$ such that $\partial_{x_2}\rho_s(x_0)>0$, one can proceed as in the proof of Proposition~\ref{prop_layer} to construct a ``layered'' initial data close to $\rho_s$ in $H^k$ norm for arbitrarily large $k>0$.
\end{remark}

\color{black}

\noindent {\bf Acknowledgement.} \rm The authors acknowledge partial support of the NSF-DMS grants 1715418, 1846745 and 2006372. AK has been partially supported by Simons Foundation. YY has been partially supported by the Sloan Research Fellowship.
This paper has been initiated at the AIM Square, and the authors thank AIM for support and collaborative opportunity.

\end{document}